\newtheorem{theorem}{Theorem}[section]
\newtheorem{lemma}[theorem]{Lemma}
\theoremstyle{definition}
\numberwithin{equation}{section}
\begin{document}

\title[Marcinkiewicz integral]
{Weighted $L^p$ bounds for the Marcinkiewicz integral}
\author[G. Hu]{Guoen Hu}
\address{Guoen Hu, Department of  Applied Mathematics, Zhengzhou Information Science and Technology Institute\\
Zhengzhou 450001,
P. R. China}
\author[M. Qu]{Meng Qu}
\address{Meng Qu, School of Mathematics and Computer Science, Anhui Normal university, Wuhu 241002, P. R. China}

\email{guoenxx@163.com,\,qumeng@mail.ahnu.edu.cn}
\thanks{The research of the first (corresponding) author was supported
the NNSF of
China under grant $\#$11371370,  the research of the second author was supported
the NNSF of
China under grant $\#$11471033.}

\keywords{Marcinkiewicz integral, weighted estimate, Fourier transform,
approximation, sparse operator.}
\subjclass{42B25}

\begin{abstract}
Let $\Omega$ be homogeneous of degree zero, have mean value zero and integrable on the unit sphere, and $\mathcal{M}_{\Omega}$ be the higher-dimensional Marcinkiewicz
integral associated with $\Omega$. In this paper, the authors proved that if $\Omega\in L^q(S^{n-1})$ for some $q\in (1,\,\infty]$, then for $p\in (q',\,\infty)$ and $w\in A_{p}(\mathbb{R}^n)$, the bound of $\mathcal{M}_{\Omega}$ on $L^p(\mathbb{R}^n,\,w)$ is less than $C[w]_{A_{p/q'}}^{2\max\{1,\,\frac{1}{p-q'}\}}$.
\end{abstract}
\maketitle

\section{Introduction}

We will work on $\mathbb{R}^n$, $n\geq 2$. Let $M$ be the Hardy-Littlewood maximal operator, and $A_p(\mathbb{R}^n)$ be the weight function class of Muckenhoupt, that is, $$A_{p}(\mathbb{R}^n)=\{w\,\hbox{is\,\,nonnegative\,\,and\,\,locally\,\,integrable\,\,in}\,\,\mathbb{R}^n:\,[w]_{A_p}<\infty\}$$
(see \cite[Chapter 9]{gra2} for the properties of $A_p(\mathbb{R}^n)$), where and in the following, $$[w]_{A_p}:=\sup_{Q}\Big(\frac{1}{|Q|}\int_Qw(x){\rm d}x\Big)\Big(\frac{1}{|Q|}\int_{Q}w^{-\frac{1}{p-1}}(x){\rm d}x\Big)^{p-1},$$
which is called the $A_p$ constant of $w$. In the remarkable work,  Buckley \cite{bu} proved that if $p\in (1,\,\infty)$ and $w\in A_{p}(\mathbb{R}^n)$, then
\begin{eqnarray}\|Mf\|_{L^{p}(\mathbb{R}^n,\,w)}\lesssim_{n,\,p}[w]_{A_p}^{\frac{1}{p-1}}\|f\|_{L^{p}(\mathbb{R}^n,\,w)}.\end{eqnarray}
Moreover, the estimate (1.1) is sharp since the exponent $1/(p-1)$ can not be replaced by a smaller one. Since then,
the sharp dependence of the weighted estimates of  singular integral operators in terms of the $A_p(\mathbb{R}^n)$ constant has been considered by many authors.  Petermichl \cite{pet1,pet2} solved this question for Hilbert transform and Riesz transform.   Hyt\"onen \cite{hy}
proved that  for a  Calder\'on-Zygmund operator $T$ and $w\in A_2(\mathbb{R}^n)$,
\begin{eqnarray}\|Tf\|_{L^{2}(\mathbb{R}^n,\,w)}\lesssim_{n}[w]_{A_2}\|f\|_{L^{2}(\mathbb{R}^n,\,w)}.\end{eqnarray}
This solved  the so-called $A_2$ conjecture. From the estimate (1.2) and the extrapolation theorem in \cite{dra}, we know that
for a Calder\'on-Zygmund operator $T$, $p\in (1,\,\infty)$ and $w\in A_p(\mathbb{R}^n)$,
\begin{eqnarray}\|Tf\|_{L^{p}(\mathbb{R}^n,\,w)}\lesssim_{n,\,p}[w]_{A_p}^{\max\{1,\,\frac{1}{p-1}\}}\|f\|_{L^{p}(\mathbb{R}^n,\,w)}.\end{eqnarray}
In \cite{ler1}, Lerner  gave a very simple proof of (1.3) by  controlling the Calder\'on-Zygmund operator using sparse operators.

Fairly recently, Hyt\"onen, Roncal and Tapiola \cite{hyta} considered the weighted  bounds of rough homogeneous singular integral operator defined by
$$T_{\Omega}f(x)={\rm p. \,v.}\int_{\mathbb{R}^n}\frac{\Omega(y')}{|y|^n}f(x-y){\rm d}y=\lim_{\epsilon\rightarrow 0\atop{R\rightarrow\infty}}\int_{\epsilon<|x-y|<R}\frac{\Omega(y')}{|y|^n}f(x-y){\rm d}y,$$
where $\Omega$ is homogeneous of degree zero, integrable on the unit sphere $S^{n-1}$ and has mean value zero.
By a quantitative weighted estimate for the $\omega$-Calder\'on-Zygmund operator,  approximation to the identity and interpolation, Hyt\"onen, Roncal  and Tapiola
(see Theorem 1.4 in \cite{hyta}) proved that
\begin{theorem}\label{1.1}
Let $p\in (1,\,\infty)$ and $w\in A_{p}(\mathbb{R}^n)$. Then
\begin{eqnarray}\|T_{\Omega}f\|_{L^p(\mathbb{R}^n,\,w)}\lesssim \|\Omega\|_{L^{\infty}(S^{n-1})}\{w\}_{A_{p}}(w)_{A_{p}}\|f\|_{L^p(\mathbb{R}^n,\,w)},\end{eqnarray}
where and in the following, for $p,\,r\in (1,\,\infty)$,
$$\{w\}_{A_p,\,r}=[w]_{A_p}^{\frac{1}{r}}\max\{[w]_{A_{\infty}}^{\frac{1}{r'}},\,[w^{1-p'}]_{A_{\infty}}^{\frac{1}{r}}\},$$
and
$$(w)_{A_p}=\max\{[w]_{A_{\infty}},\,[w^{1-p'}]_{A_{\infty}}\}.$$
\end{theorem}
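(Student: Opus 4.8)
The plan is to make quantitative the Duoandikoetxea--Rubio de Francia decomposition of rough homogeneous singular integrals. Write the kernel of $T_\Omega$ as $K(x)=\Omega(x/|x|)|x|^{-n}$ and split it dyadically, $K=\sum_{j\in\mathbb{Z}}K_j$, where $K_j$ is supported in the annulus $\{2^{j}\le|x|<2^{j+1}\}$ and, because $\int_{S^{n-1}}\Omega=0$, satisfies $\int_{\mathbb{R}^n}K_j=0$; put $T_jf=K_j*f$. Fix a radial Schwartz bump $\varphi$ with $\widehat{\varphi}(0)=1$ and set $\varphi_t(x)=t^{-n}\varphi(x/t)$. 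Telescoping $K_j=K_j*\varphi_{2^{j}}+\sum_{s\ge1}K_j*(\varphi_{2^{j-s}}-\varphi_{2^{j-s+1}})$ and summing in $j$ gives $T_\Omega=\sum_{s\ge0}T^{s}$, where the kernel of $T^{s}$ is, on each annulus $\{|x|\sim 2^{j}\}$, a mollification of $K_j$ at the finer scale $2^{j-s}$. A routine estimate then shows that $T^{s}$ is a Calder\'on--Zygmund operator whose kernel obeys $|K^{s}(x)|\lesssim\|\Omega\|_{L^\infty}|x|^{-n}$ and a smoothness bound with modulus of continuity $\omega_s(t)=\min\{1,2^{s}t\}$; hence $\|\omega_s\|_{\mathrm{Dini}}=\int_0^1\omega_s(t)\frac{{\rm d}t}{t}\lesssim 1+s$, and $T^{0}$ is a standard Lipschitz-kernel operator.

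The argument then rests on two a priori estimates for $T^{s}$. The first is an unweighted $L^2$ bound with geometric decay: by Plancherel it is enough to control $\|\widehat{K^{s}}\|_{L^\infty}$, and the cancellation of $\Omega$ gives $|\widehat{K_j}(\xi)|\lesssim\|\Omega\|_{L^\infty}\min\{2^{j}|\xi|,\,(2^{j}|\xi|)^{-\tau}\}$ for some $\tau=\tau(n)>0$ (the small-scale bound uses $\int\Omega=0$, the large-scale one a van der Corput estimate for the spherical average together with $\Omega\in L^\infty(S^{n-1})\subset L^2(S^{n-1})$); since the $j$-th block of $T^{s}$ is frequency-localized to $|\xi|\sim 2^{s-j}$, summing in $j$ yields $\|T^{s}\|_{L^2\to L^2}\lesssim\|\Omega\|_{L^\infty}\,2^{-\tau s}$. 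The second is the quantitative weighted estimate for Dini--Calder\'on--Zygmund operators furnished by their sparse domination (Lacey; Hyt\"onen--P\'erez; see also \cite{ler1}): for every $q\in(1,\infty)$ and $v\in A_q$,
\[
\|T^{s}f\|_{L^q(v)}\lesssim\bigl(\|T^{s}\|_{L^2\to L^2}+\|\omega_s\|_{\mathrm{Dini}}+\|\Omega\|_{L^\infty}\bigr)\{v\}_{A_q}\|f\|_{L^q(v)}\lesssim\|\Omega\|_{L^\infty}(1+s)\{v\}_{A_q}\|f\|_{L^q(v)}.
\]

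To combine the two and sum over $s$ I would use interpolation with change of measure. The single-scale estimate above is only linear in $s$ but holds for all weights, whereas the $L^2$ bound decays but carries no weight. Given the target $w\in A_p$, both $w$ and $w^{1-p'}$ admit reverse-H\"older self-improvement with exponents quantitatively controlled by $[w]_{A_\infty}$ and $[w^{1-p'}]_{A_\infty}$, hence by $(w)_{A_p}$; this permits a choice of $\theta\in(0,1)$ with $1-\theta$ of order $(w)_{A_p}^{-1}$, an exponent $p_1$ near $p$, and a weight $u=w^{1/\theta}\in A_{p_1}$ whose Muckenhoupt constants are still controlled by those of $w$, such that $w$ is the Stein--Weiss interpolant at exponent $p$ of Lebesgue measure (at exponent $2$) and $u$ (at exponent $p_1$). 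Interpolating the two a priori bounds then gives, for each $s$, an estimate $\|T^{s}f\|_{L^p(w)}\lesssim 2^{-\tau(1-\theta)s}(1+s)\,\Phi(w)\,\|f\|_{L^p(w)}$, where $\Phi(w)$ is a controlled power of the Muckenhoupt constants of $w$; summing the resulting geometric series in $s$ — splitting, if needed, at a scale $s_0$ comparable to $(w)_{A_p}\log(2+[w]_{A_p})$ and treating the head $s\le s_0$ by the linear-in-$s$ bound and the tail by the decaying one — gives the desired estimate for $T_\Omega=\sum_{s\ge0}T^{s}$.

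I expect this last step to be the main obstacle: executed naively the scheme leaves an extra polynomial factor in $(w)_{A_p}$ (the linear-in-$s$ Dini factor summed against a decay rate that is only of order $(w)_{A_p}^{-1}$ contributes roughly $(w)_{A_p}^{2}$, one power beyond the room the statement allows), so one has to choose the interpolation parameter $\theta$ and the split $s_0$ essentially optimally and track every constant through the reverse-H\"older step, the interpolation, and the summation so that the cumulative loss collapses to the single factor $\{w\}_{A_p}(w)_{A_p}$. The remaining ingredients — the Fourier decay of $\widehat{K_j}$, the Calder\'on--Zygmund/Dini kernel estimates for the $T^{s}$, and the $A_\infty$-refined weighted bound for sparse operators — are standard. (An alternative, more recent route to an estimate of the same strength is to dominate $T_\Omega$ itself by $(1,r)$-type sparse forms with constant $\lesssim r'$ and then optimize $r$ against the weight constants; but the decomposition-plus-interpolation scheme above is the one behind \cite{hyta}.)
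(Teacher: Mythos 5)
Theorem~1.1 is not proved in this paper; it is quoted from Hyt\"onen--Roncal--Tapiola \cite{hyta}, and the paper's own argument (for Theorem~1.2) is a Marcinkiewicz-integral adaptation of that scheme. Measured against that scheme, your outline has the right ingredients---dyadic decomposition of the kernel, mollification, Fourier decay from Duoandikoetxea--Rubio de Francia, sparse/$A_\infty$-refined bounds for Dini--Calder\'on--Zygmund operators, reverse-H\"older self-improvement, and Stein--Weiss interpolation with change of measure---so the plan is the correct one. But the specific decomposition you choose creates a genuine quantitative gap, and you have not closed it.

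The issue is the choice of decomposition $T_\Omega=\sum_{s\ge 0}T^s$ with $s$ running linearly. Each $T^s$ has Dini norm $\lesssim 1+s$ and $L^2$ norm $\lesssim 2^{-\tau s}$, so after reverse H\"older (giving $\epsilon\sim (w)_{A_p}^{-1}$) and Stein--Weiss interpolation the per-scale weighted bound is roughly $(1+s)\,2^{-\varrho\epsilon s}\{w\}_{A_p}$, and the sum $\sum_s (1+s)2^{-\varrho\epsilon s}\sim\epsilon^{-2}\sim (w)_{A_p}^2$ --- one power more than the target $\{w\}_{A_p}(w)_{A_p}$ allows. You correctly diagnose this, but your proposed remedy (split at $s_0\sim (w)_{A_p}\log(2+[w]_{A_p})$, treat the head by the linear-in-$s$ bound) does not repair it as written: summing the per-$s$ weighted bounds over $s\le s_0$ again costs $\sim s_0^2$. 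What is actually needed is to \emph{regroup at the kernel level before applying the weighted estimate}, so that the head is handled as a single Dini--CZ operator. Concretely, write $T^l_\Omega f=\sum_j K_j*\phi_{2^{j-l}}*f$ and telescope
$$T_\Omega = T^2_\Omega + \sum_{l\ge 1}\bigl(T^{2^{l+1}}_\Omega - T^{2^l}_\Omega\bigr),$$
which is exactly the re-indexing used in \cite{hyta} and mirrored in the paper's proof of Theorem~1.2 (see (2.16)--(2.18) and the remark after them). Each block $T^{2^{l+1}}_\Omega - T^{2^l}_\Omega$ has Dini norm $\lesssim 2^l$ (not $2^{2l}$: the single mollified kernels already have Dini norm $\lesssim 2^{l}$) and $L^2$ norm $\lesssim 2^{-\theta 2^l}$, super-exponentially small in $l$. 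After Stein--Weiss interpolation the summand becomes $2^l\,2^{-\varrho\epsilon\,2^l/(1+\epsilon)}\{w\}_{A_p}$, and using $e^x\ge x^2/2$,
$$\sum_{l\ge 1} 2^l\,2^{-\varrho\epsilon 2^l/(1+\epsilon)}\lesssim \sum_{2^l\le \epsilon^{-1}}2^l + \sum_{2^l>\epsilon^{-1}}2^l\bigl(2^l\epsilon\bigr)^{-2}\lesssim \epsilon^{-1}\sim (w)_{A_p},$$
exactly one power, which yields the stated bound $\{w\}_{A_p}(w)_{A_p}$. In short: keep your plan, but replace the linear ladder $\sum_s T^s$ by the doubly-exponential telescope; the super-exponential $L^2$ decay against only polynomial Dini growth is what lets the geometric sum collapse to a single factor of $(w)_{A_p}$.
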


Now we consider the Marcinkiewicz integral operator. Let $\Omega$ be
homogeneous of degree zero, integrable and have mean value zero on
the unit sphere $S^{n-1}$. Define the Marcinkiewicz integral
operator $\mathcal{M}_\Omega$ by
\begin{eqnarray}\mathcal{M}_\Omega(f)(x)= \Big(\int_0^\infty|F_{\Omega,\,
t}f(x)|^2\frac{{\rm d}t}{t^3}\Big)^{1/2},\end{eqnarray}where $$F_{\Omega,
t}f(x)= \int_{|x-y|\leq t}\frac{\Omega(x-y)}{|x-y|^{n-1}}f(y){\rm d}y $$ for
$f\in \mathcal{S}(\mathbb{R}^n)$.  Stein \cite{st}
proved that if $\Omega\in {\rm Lip}_{\gamma}(S^{n-1})$ with
$\gamma\in (0,\,1]$, then $\mathcal{M}_\Omega$ is bounded on
$L^p(\mathbb{R}^n)$ for $p\in (1,\,2]$. Benedek, Calder\'on and
Panzon showed that the $L^p(\mathbb{R}^n)$ boundedness $(p\in
(1,\,\infty)$) of $\mathcal{M}_\Omega$ holds true  under the
condition that $\Omega\in C^1(S^{n-1})$.  Walsh \cite{wal} proved
that for each $p\in (1,\,\infty)$, $\Omega\in L(\ln L)^{1/r}(\ln \ln
L)^{2(1-2/r')}(S^{n-1})$ is a sufficient condition such that
$\mathcal{M}_\Omega$ is bounded on $L^{p}(\mathbb{R}^n)$, where
$r=\min\{p,\,p'\}$ and $p'=p/(p-1)$. Ding, Fan and Pan \cite{dfp}
proved that if $\Omega\in H^1(S^{n-1})$ (the Hardy space on
$S^{n-1}$), then $\mathcal{M}_\Omega$ is bounded on
$L^p(\mathbb{R}^n)$  for all $p\in (1,\,\infty)$; Al-Salmam et al.
\cite{aacp} proved that $\Omega\in L(\ln L)^{1/2}(S^{n-1})$ is a
sufficient condition such that $\mathcal{M}_\Omega$ is bounded on
$L^p(\mathbb{R}^n)$ for all $p\in(1,\,\infty)$. Ding, Fan and Pan \cite{dfp2} considered the boundedness on weighted $L^p({\mathbb R}^n)$
with $A_p(\mathbb{R}^n)$ when $\Omega\in L^q(S^{n-1})$ for some $q\in (1,\,\infty]$. For other works about the operator $\mathcal{M}_{\Omega}$, see \cite{a, cfp, dfp,dxy} and the related references therein.

The purpose of this paper  is to establish an analogy of (1.4) for the Marcinkiewicz integral operator with kernel $\Omega\in L^q(S^{n-1})$ for some $q\in (1,\,\infty]$. We remark that in this paper, we are very much motivated by \cite{hyta}, some ideas are from
Lerner's recently paper \cite{ler3}.
Our main result can be stated as follows.

\begin{theorem}\label{t1.1}
Let $\Omega$ be homogeneous of degree zero, have mean value zero on $S^{n-1}$, and $\Omega\in L^q(S^{n-1})$ for some $q\in (1,\,\infty]$. Let $p\in (q',\,\infty)$ and $w\in A_{p/q'}(\mathbb{R}^n)$. Then
$$\|\mathcal{M}_{\Omega}f\|_{L^p(\mathbb{R}^n,\,w)}\lesssim \|\Omega\|_{L^q(S^{n-1})}\{w\}_{A_{p/q'},\,p}(w)_{A_{p/q'}}\|f\|_{L^p(\mathbb{R}^n,\,w)}.$$
In particular,
$$\|\mathcal{M}_{\Omega}f\|_{L^p(\mathbb{R}^n,\,w)}\lesssim \|\Omega\|_{L^q(S^{n-1})}[w]_{A_{p/q'}}^{2\max\{1,\,\frac{1}{p-q'}\}}\|f\|_{L^p(\mathbb{R}^n,\,w)}.$$
\end{theorem}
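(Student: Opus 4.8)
\emph{Strategy and setup.} I would realize $\mathcal{M}_{\Omega}$ as a Hilbert-space-valued convolution operator and then run a Lerner-type sparse domination, the hypothesis $\Omega\in L^q$ being used precisely to extract Fourier decay and an $L^{q'}$-improving property of the dyadic pieces; the weighted conclusion is then read off from the sparse bound just as in \cite{hyta}. Put $\mathcal{H}=L^2\big((0,\infty),{\rm d}t/t^{3}\big)$ and let $\vec{K}(z)=\big(|z|^{-(n-1)}\Omega(z)\mathbf{1}_{\{|z|\le t\}}\big)_{t>0}$, an $\mathcal{H}$-valued kernel, so that $\mathcal{M}_{\Omega}f(x)=\|\vec{K}*f(x)\|_{\mathcal{H}}$. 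One computes $\|\vec{K}(z)\|_{\mathcal{H}}\approx|\Omega(z)|\,|z|^{-n}$; thus the \emph{size} of this vector kernel is that of the rough Calder\'on--Zygmund kernel $\Omega(z')|z|^{-n}$. Fix a radial dyadic partition $1=\sum_{j\in\mathbb{Z}}\varphi(2^{-j}|z|)$ with $\mathrm{supp}\,\varphi\subset[1/2,2]$, set $\vec{K}_j(z)=\vec{K}(z)\varphi(2^{-j}|z|)$ and $\mathcal{M}_jf(x)=\|\vec{K}_j*f(x)\|_{\mathcal{H}}$. Because $\varphi$ is radial and $\Omega$ has mean value zero, $\int_{\mathbb{R}^n}\vec{K}_j=0$ in $\mathcal{H}$, and $\mathcal{M}_{\Omega}f\le\sum_{j\in\mathbb{Z}}\mathcal{M}_jf$ pointwise by Minkowski's inequality in $\mathcal{H}$ (note $\mathcal{M}_{\Omega}$ is only sublinear, which is all the later stopping argument needs).

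\emph{Step 1: Fourier and local estimates from $\Omega\in L^q$.} By the vector-valued Plancherel theorem, $\|\mathcal{M}_jf\|_{L^2}^2=\int_{\mathbb{R}^n}\|\widehat{\vec{K}_j}(\xi)\|_{\mathcal{H}}^2|\widehat{f}(\xi)|^2{\rm d}\xi$. The cancellation $\int\vec{K}_j=0$ and $\big\|\,|z|\,\vec{K}_j(z)\big\|_{L^1({\rm d}z;\mathcal{H})}\lesssim 2^{j}\|\Omega\|_{L^1(S^{n-1})}$ give $\|\widehat{\vec{K}_j}(\xi)\|_{\mathcal{H}}\lesssim\|\Omega\|_{L^q(S^{n-1})}\min\{1,2^{j}|\xi|\}$, while separating the radial cut-off from the angular density and applying H\"older's inequality with exponent $q$ to the resulting oscillatory integral over $S^{n-1}$ gives a decay $\|\widehat{\vec{K}_j}(\xi)\|_{\mathcal{H}}\lesssim\|\Omega\|_{L^q(S^{n-1})}(2^{j}|\xi|)^{-\delta}$ for some $\delta=\delta(q,n)>0$. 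Summing over $j$ already yields $\|\mathcal{M}_{\Omega}\|_{L^2\to L^2}\lesssim\|\Omega\|_{L^q(S^{n-1})}$; and with $P_k$ the Littlewood--Paley projection onto $\{|\xi|\sim 2^{-k}\}$, $\|\mathcal{M}_j P_kf\|_{L^2}\lesssim\|\Omega\|_{L^q(S^{n-1})}2^{-\delta|j-k|}\|f\|_{L^2}$. Finally, H\"older's inequality in $y$ (now using $\Omega\in L^q$ directly on the sphere) gives the $L^{q'}$-improving bound $\mathcal{M}_jf(x)\lesssim\|\Omega\|_{L^q(S^{n-1})}\big(|B|^{-1}\int_{B}|f|^{q'}\big)^{1/q'}$ with $B=B(x,2^{j+1})$.

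\emph{Step 2: sparse domination --- the main obstacle.} Feeding Step 1 into Lerner's stopping-time scheme for rough singular integrals (cf.\ \cite{ler3}), adapted so that the $\mathcal{H}$-valued square-function structure is carried along, I would prove that for all compactly supported $f$, all $g\ge0$ and every $r\in(1,\infty)$ there is a sparse family $\mathcal{S}$ with
\begin{equation*}
\int_{\mathbb{R}^n}\mathcal{M}_{\Omega}f(x)\,g(x)\,{\rm d}x\lesssim r'\,\|\Omega\|_{L^q(S^{n-1})}\sum_{Q\in\mathcal{S}}|Q|\Big(\frac{1}{|Q|}\int_Q|f|^{q'}\Big)^{1/q'}\Big(\frac{1}{|Q|}\int_Q g^{r}\Big)^{1/r}.
\end{equation*}
The local core is that, for $f$ supported in a cube $Q_0$ of side $2^{j_0}$, the maximal truncation of $\sum_{j\ge j_0}\mathcal{M}_j$ is controlled on $3Q_0$ by $L^{q'}$ averages of $f$, while its contribution at distance scale $2^{j_0+m}$ decays like $2^{-\delta m}$ by the frequency estimate of Step 1; combining this with a Calder\'on--Zygmund decomposition of $|f|^{q'}$ at height $|Q_0|^{-1}\int_{Q_0}|f|^{q'}$ produces the bilinear bound, the necessity of $r>1$ reflecting that $\mathcal{M}_{\Omega}$ is \emph{not} of weak type $(q',q')$. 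This is the hard part: one must extract from the single hypothesis $\Omega\in L^q(S^{n-1})$ both a quantitative Fourier decay uniform in the $\mathcal{H}$-variable and the $L^{q'}$-improving property, and then drive the rough-operator stopping argument through while keeping the $\|\Omega\|_{L^q}$-dependence sharp.

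\emph{Step 3: from the sparse form to weights.} Testing the displayed sparse form on $L^p(\mathbb{R}^n,w)\times L^{p'}(\mathbb{R}^n,w^{1-p'})$ for $p\in(q',\infty)$ and $w\in A_{p/q'}(\mathbb{R}^n)$ --- using Buckley's weighted bound for $M$, the usual comparisons of the $A_\infty$ constants of $w$ and of $w^{1-(p/q')'}$ with powers of $[w]_{A_{p/q'}}$, and the choice of $r$ with $r'$ comparable to the relevant $A_\infty$ constant, exactly as in the proof of Theorem~\ref{1.1} in \cite{hyta} --- one obtains
$$\|\mathcal{M}_{\Omega}f\|_{L^p(\mathbb{R}^n,w)}\lesssim\|\Omega\|_{L^q(S^{n-1})}\{w\}_{A_{p/q'},p}(w)_{A_{p/q'}}\|f\|_{L^p(\mathbb{R}^n,w)},$$
and the second, cruder estimate of the theorem follows by dominating each $A_\infty$ constant occurring in $\{w\}_{A_{p/q'},p}(w)_{A_{p/q'}}$ by the corresponding power of $[w]_{A_{p/q'}}$. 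Everything here is bookkeeping once Step 2 is in hand.
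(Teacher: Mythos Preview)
Your proposal follows a genuinely different route from the paper. The paper does not sparse-dominate the rough Marcinkiewicz integral directly; instead it introduces mollified approximations $\widetilde{\mathcal{M}}_\Omega^l$ (convolving each dyadic kernel piece $K_t^j$ with a bump $\phi_{j-l}$), proves via Plancherel that $\|\widetilde{\mathcal{M}}_\Omega-\widetilde{\mathcal{M}}_\Omega^l\|_{L^2\to L^2}\lesssim 2^{-\theta l}$, and then shows that the grand maximal function of each smoothed operator $\widetilde{\mathcal{M}}_\Omega^l$ is bounded $L^{q'}\to L^{q',\infty}$ with constant $\lesssim l$, so that Lerner's pointwise sparse criterion (Lemma~\ref{l3.3}) yields $\|\widetilde{\mathcal{M}}_\Omega^l f\|_{L^p(w)}\lesssim l\,\{w\}_{A_{p/q'},p}\|f\|_{L^p(w)}$. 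The theorem is completed by writing $\widetilde{\mathcal{M}}_\Omega=\widetilde{\mathcal{M}}_\Omega^2+\sum_{l\ge1}(\widetilde{\mathcal{M}}_\Omega^{2^{l+1}}-\widetilde{\mathcal{M}}_\Omega^{2^l})$, interpolating with change of measure (Stein--Weiss) between the exponential $L^2$ decay and the linear-in-$l$ weighted $L^p$ bound, and summing; the extra factor $(w)_{A_{p/q'}}$ emerges from this summation after choosing the interpolation parameter $\epsilon\sim (w)_{A_{p/q'}}^{-1}$. Your scheme instead seeks a single bilinear sparse form $\lesssim r'\sum_Q|Q|\langle|f|^{q'}\rangle_Q^{1/q'}\langle g^r\rangle_Q^{1/r}$ for the rough operator itself, in the style of Conde-Alonso--Culiuc--Di Plinio--Ou, with the factor $(w)_{A_{p/q'}}$ entering by optimising over $r$. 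That is more streamlined once your Step~2 is established --- no mollification, no telescoping, no interpolation with change of measure --- but Step~2 is precisely the difficulty the paper circumvents by smoothing first so that Lerner's off-the-shelf criterion applies to each $\widetilde{\mathcal{M}}_\Omega^l$. One small inaccuracy: your Step~3 appeal to ``exactly as in the proof of Theorem~\ref{1.1} in \cite{hyta}'' is misplaced, since \cite{hyta} also uses the mollify--telescope--interpolate mechanism (identical to this paper's) to produce the $(w)_{A_p}$ factor, not an optimisation over $r$ in a bilinear sparse form; the latter bookkeeping is in the Conde-Alonso--Culiuc--Di Plinio--Ou line of work rather than in \cite{hyta}.
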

We make some conventions. In what follows, $C$ always denotes a
positive constant that is independent of the main parameters
involved but whose value may differ from line to line. We use the
symbol $A\lesssim B$ to denote that there exists a positive constant
$C$ such that $A\le CB$.   For a set $E\subset\mathbb{R}^n$,
$\chi_E$ denotes its characteristic function.

\section{Proof of Theorem 1.2}
Recall that  the standard dyadic grid in $\mathbb{R}^n$ consists of all cubes of the form $$2^{-k}([0,\,1)^n+j),\,k\in \mathbb{Z},\,\,j\in\mathbb{Z}^n.$$
Denote the standard grid by $\mathcal{D}$.

As usual, by a general dyadic grid $\mathscr{D}$,  we mean a collection of cube with the following properties: (i) for any cube $Q\in \mathscr{D}$, it side length $\ell(Q)$ is of the form $2^k$ for some $k\in \mathbb{Z}$; (ii) for any cubes $Q_1,\,Q_2\in \mathscr{D}$, $Q_1\cap Q_2\in\{Q_1,\,Q_2,\,\emptyset\}$; (iii) for each $k\in \mathbb{Z}$, the cubes of side length $2^k$ form a partition of $\mathbb{R}^n$.

Let $\eta\in (0,\,1)$ and $\mathcal{S}$ be a family of cubes. We say that $\mathcal{S}$ is $\eta$-sparse,  if for each fixed $Q\in \mathcal{S}$, there exists a measurable subset $E_Q\subset Q$, such that $|E|\geq \eta|Q|$ and $\{E_{Q}\}$ are pairwise disjoint.
Associated with  the sparse family $\mathcal{S}$, we define the sparse operator $\mathcal{A}_{\mathcal{S}}$ by
$$\mathcal{A}_{\mathcal{S}}f(x)=\sum_{Q\in\mathcal{S}}\langle f\rangle_{Q}\chi_{Q}(x),$$
where and in the following, $\langle f\rangle_Q=\frac{1}{|Q|}\int_{Q}f(y)dy.$ For $r\in (0,\,\infty)$, let
$\mathcal{A}_{\mathcal{S}}^{r}$ be the operator defined by
$$\mathcal{A}_{\mathcal{S}}^{r}f(x)=\Big\{\sum_{Q\in\mathcal{S}}\big(\langle f\rangle_{Q}\big)^r\chi_{Q}(x)\Big\}^{1/r},$$

The following result was proved by Hyt\"onen and  Lacey \cite{hyla}, see also Hyt\"onen and Li \cite{hyli}.
\begin{lemma}\label{l2.1} Let $p\in (1,\,\infty)$ and $r\in (0,\,p)$, $w\in A_p(\mathbb{R}^n)$.
Then for a sparse family $\mathcal{S}\subset\mathscr{D}$ with $\mathscr{D}$ a dyadic grid,
$$\|\mathcal{A}_{\mathcal{S}}^{r}f\|_{L^p(\mathbb{R}^n,\,w)}\lesssim [w]_{A_p}^{1/p}
\big([w]_{A_{\infty}}^{\frac{1}{r}-\frac{1}{p}}+[w^{-\frac{1}{p-1}}]_{A_{\infty}}^{\frac{1}{p}}\big)
\|f\|_{L^{p}(\mathbb{R}^n,\,w)}.$$
\end{lemma}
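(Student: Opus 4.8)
The plan is to prove the weighted bound for $\mathcal{M}_\Omega$ by establishing a sparse domination estimate and then invoking Lemma \ref{l2.1} together with the known unweighted bounds. I will follow the circle of ideas in \cite{hyta} and \cite{ler3}, adapted to the square-function structure of the Marcinkiewicz integral.

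First I would decompose the kernel dyadically in the radial variable. Writing $K_t(y)=\frac{\Omega(y)}{|y|^{n-1}}\chi_{\{|y|\le t\}}$, one has $\mathcal{M}_\Omega f(x)=\|K_t*f(x)\|_{L^2((0,\infty),\,\mathrm{d}t/t^3)}$. Splitting the region $|y|\le t$ according to the dyadic annuli $\{2^{j-1}<|y|\le 2^j\}$ and using Minkowski's inequality, one reduces matters to vector-valued kernels supported in single annuli; this is the standard route (cf. \cite{dfp}, \cite{aacp}) that turns $\mathcal{M}_\Omega$ into (essentially) a vector-valued rough singular integral with kernel $\widetilde K(y)=\Omega(y)/|y|^{n-1}$ restricted to an annulus, coupled over the scale parameter. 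The main point I would extract from this step is a pointwise, vector-valued control of the truncated pieces by $M_{L^{q'}}f$ (the $L^{q'}$-averaging maximal operator), via Hölder's inequality on each annulus using $\Omega\in L^q(S^{n-1})$; this is exactly why the natural weight class turns out to be $A_{p/q'}$ rather than $A_p$.

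Next I would prove the sparse bound: there is a sparse family $\mathcal{S}$ (depending on $f$ and on a dyadic grid $\mathscr{D}$) such that
$$\mathcal{M}_\Omega f(x)\lesssim \|\Omega\|_{L^q(S^{n-1})}\,\mathcal{A}_{\mathcal{S}}^{q'}(|f|)(x)\quad\text{for a.e. }x,$$
or more precisely a bilinear form version $\langle \mathcal{M}_\Omega f,g\rangle\lesssim \|\Omega\|_{L^q}\sum_{Q\in\mathcal{S}}\langle |f|^{q'}\rangle_Q^{1/q'}\langle |g|\rangle_Q|Q|$. The engine here is the Calderón–Zygmund decomposition / stopping-time argument of Lerner: one runs the standard recursion producing a sparse collection, and at each stopping cube one must control a local maximal truncation of $\mathcal{M}_\Omega$ by the $L^{q'}$ average on the parent. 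This requires a quantitative estimate for the truncated operators $\mathcal{M}_{\Omega,\,\delta,\,R}$ and for the associated grand maximal truncation $\mathcal{M}^*_\Omega$; the relevant weak-type $(q',q')$ (or $L^{q'}\to L^{q',\infty}$) bound for $\mathcal{M}^*_\Omega$ follows by combining the $L^2$-type estimates of Stein/Benedek–Calderón–Panzone with the rough-kernel techniques, interpolated down to the endpoint near $q'$. I expect \emph{this} step — obtaining the right quantitative, truncation-uniform $L^{q'}$ bounds for the maximal Marcinkiewicz operator with merely $L^q$ kernel, in a form robust enough to feed the stopping-time recursion — to be the main obstacle; the Fourier-transform estimates of the kernel (the annular pieces have Fourier decay, giving an $L^2$ improvement that one sums geometrically) are the technical heart and must be done carefully to keep the $\|\Omega\|_{L^q}$ dependence linear.

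Finally I would combine the pieces. Given the sparse bound, for $p\in(q',\infty)$ and $w\in A_{p/q'}(\mathbb{R}^n)$ one applies Lemma \ref{l2.1} with the exponent $p/q'$ in place of $p$ and $r=q'\in(0,p/q')$… wait, one must rescale: set $u=w$, note $\mathcal{A}_{\mathcal{S}}^{q'}(|f|)=\big(\mathcal{A}_{\mathcal{S}}(|f|^{q'})\big)^{1/q'}$ is not literally of the form in Lemma \ref{l2.1}, so instead one writes $\|\mathcal{A}_{\mathcal{S}}^{q'}(|f|)\|_{L^p(w)}=\|\mathcal{A}_{\mathcal{S}}g\|_{L^{p/q'}(w)}^{1/q'}$ with $g=|f|^{q'}$ — again not matching; the clean route is to apply Lemma \ref{l2.1} directly to $\mathcal{A}_{\mathcal{S}}^{q'}$ with its stated exponents $(p,r)=(p,q')$, valid since $q'<p$, yielding
$$\|\mathcal{A}_{\mathcal{S}}^{q'}f\|_{L^p(w)}\lesssim [w]_{A_p}^{1/p}\big([w]_{A_\infty}^{1/q'-1/p}+[w^{1-p'}]_{A_\infty}^{1/p}\big)\|f\|_{L^p(w)}.$$
Then one re-expresses the right-hand side in terms of the $A_{p/q'}$ constant: using $[w]_{A_p}\le[w]_{A_{p/q'}}$-type comparisons and the definition of $\{w\}_{A_{p/q'},\,p}$ and $(w)_{A_{p/q'}}$ given in the statement of Theorem \ref{1.1}, one matches the quantity $\{w\}_{A_{p/q'},\,p}(w)_{A_{p/q'}}$ claimed in Theorem \ref{t1.1}. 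The second, cleaner bound $[w]_{A_{p/q'}}^{2\max\{1,1/(p-q')\}}$ then follows from the elementary inequalities $[w]_{A_\infty}\lesssim[w]_{A_{p/q'}}$, $[w^{1-(p/q')'}]_{A_\infty}\lesssim[w]_{A_{p/q'}}^{(p/q')'-1}$ and bookkeeping of exponents, exactly as the passage from \eqref{1.2}-type estimates to \eqref{1.3}.
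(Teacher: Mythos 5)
You have proved the wrong statement. The statement in question is Lemma \ref{l2.1} itself: the weighted bound $\|\mathcal{A}_{\mathcal{S}}^{r}f\|_{L^p(\mathbb{R}^n,\,w)}\lesssim [w]_{A_p}^{1/p}\big([w]_{A_{\infty}}^{\frac{1}{r}-\frac{1}{p}}+[w^{-\frac{1}{p-1}}]_{A_{\infty}}^{\frac{1}{p}}\big)\|f\|_{L^{p}(\mathbb{R}^n,\,w)}$ for the sparse square-function-type operator $\mathcal{A}_{\mathcal{S}}^{r}$. Your proposal never addresses this operator at all: it takes Lemma \ref{l2.1} as a known input and sketches how to combine it with a sparse domination of $\mathcal{M}_{\Omega}$ to deduce Theorem \ref{t1.1}. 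As a proof of the lemma this is vacuous (indeed circular, since you invoke the lemma inside the argument). For the record, the paper does not prove the lemma either; it quotes it from Hyt\"onen--Lacey and Hyt\"onen--Li, so there is no ``paper proof'' to compare against, but a blind attempt must still engage with the actual claim.

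Concretely, what is missing is the entire $A_p$--$A_\infty$ mechanism behind the stated constant. A genuine proof for $0<r<p$ would write $\|\mathcal{A}_{\mathcal{S}}^{r}f\|_{L^p(\mathbb{R}^n,\,w)}^{r}=\big\|\sum_{Q\in\mathcal{S}}\langle f\rangle_{Q}^{r}\chi_{Q}\big\|_{L^{p/r}(\mathbb{R}^n,\,w)}$, dualize against $g\geq 0$ with $\|g\|_{L^{(p/r)'}(\mathbb{R}^n,\,w)}=1$ to reduce to bounding $\sum_{Q\in\mathcal{S}}\langle f\rangle_{Q}^{r}\langle gw\rangle_{Q}|Q|$, estimate $\langle f\rangle_{Q}\leq\langle |f|^{p}w\rangle_{Q}^{1/p}\langle\sigma\rangle_{Q}^{1/p'}$ with $\sigma=w^{1-p'}$, and then apply the weighted Carleson embedding theorem twice; the sparseness of $\mathcal{S}$ yields Carleson conditions with constants controlled by $[\sigma]_{A_{\infty}}$ and $[w]_{A_{\infty}}$ (this is exactly where the two $A_\infty$ factors and the exponent $\frac{1}{r}-\frac{1}{p}$ come from), while recombining $\langle w\rangle_{Q}\langle\sigma\rangle_{Q}^{p-1}\leq[w]_{A_p}$ produces the factor $[w]_{A_p}^{1/p}$. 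None of this appears in your write-up, and the heuristics you do give (kernel decomposition, Fourier decay, stopping times for $\mathcal{M}_{\Omega}$) are irrelevant to the lemma, which is a purely dyadic statement about an abstract sparse family.
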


Let $\Omega$ be homogeneous of degree zero, integrable on $S^{n-1}$. For $t\in [1,\,2]$ and $j\in\mathbb{Z}$, set
\begin{eqnarray}K^j_t(x)=\frac{1}{2^j}\frac{\Omega(x)}{|x|^{n-1}}\chi_{\{2^{j-1}t<|x|\leq 2^jt\}}(x).\end{eqnarray}
As it was proved in \cite{drf}, if $\Omega\in L^q(S^{n-1})$ for some
$q\in (1,\,\infty]$, then there exists a constant $\alpha\in
(0,\,1)$ such that for $t\in[1,\,2]$ and
$\xi\in\mathbb{R}^n\backslash\{0\}$,
\begin{eqnarray}|\widehat{K^j_t}(\xi)|\lesssim \|\Omega\|_{L^{q}(S^{n-1})}\min\{1,\,|2^j\xi|^{-\alpha}\}.\end{eqnarray} Here and in the following for $h\in\mathcal{S}'(\mathbb{R}^n)$, $\widehat{h}$ denotes the Fourier transform of $h$. Moreover, if $\int_{S^{n-1}}\Omega(x'){\rm d}x'=0$, then
\begin{eqnarray}|\widehat{K^j_t}(\xi)|\lesssim \|\Omega\|_{L^{1}(S^{n-1})}\min\{1,\,|2^j\xi|\}.\end{eqnarray}

In the following, we assume that $\|\Omega\|_{L^{q}(S^{n-1})}=1$. Let
$$\widetilde{\mathcal{M}}_{\Omega}f(x)=\Big(\int^2_1\sum_{j\in\mathbb{Z}}\big|F_{j}f(x,\,t)\big|^2{\rm d}t\Big)^{1/2},$$with
$$F_jf(x,\,t)=\int_{\mathbb{R}^n}K^j_t(x-y)f(y){\rm d}y.$$
A trivial computation leads to that
\begin{eqnarray}\mathcal{M}_{\Omega}f(x)\approx \widetilde{\mathcal{M}}_{\Omega}f(x).
\end{eqnarray}

 Let $\phi\in
C^{\infty}_0(\mathbb{R}^n)$ be a nonnegative function such that
$\int_{\mathbb{R}^n}\phi(x){\rm d}x=1$, ${\rm
supp}\,\phi\subset\{x:\,|x|\leq 1/4\}$. For $l\in \mathbb{Z}$, let
$\phi_l(y)=2^{-nl}\phi(2^{-l}y)$. It is easy to verify that for any
$\varsigma\in (0,\,1)$,
\begin{eqnarray}|\widehat{\phi_l}(\xi)-1|\lesssim \min\{1,\,|2^l\xi|^{\varsigma}\}.\end{eqnarray}Let
$$F_{j}^lf(x,\,t)=\int_{\mathbb{R}^n}K^j_t*\phi_{j-l}(x-y)f(y)\,{\rm d}y.
$$
Define the operator $\widetilde{\mathcal{M}}_{\Omega}^l$ by
\begin{eqnarray}\widetilde{\mathcal{M}}_{\Omega}^lf(x)=\Big(\int^2_1\sum_{j\in\mathbb{Z}}\big|F_{j}^lf(x,\,t)\big|^2
{\rm d}t\Big)^{1/2}.\end{eqnarray}
By Fourier transform estimates (2.2) and (2.5), and Plancherel's theorem, we have
that for some positive constant $\theta$ depending only on $n$,
\begin{eqnarray}
\|\widetilde{\mathcal{M}}_{\Omega}f-\widetilde{\mathcal{M}}_{\Omega}^{l}f\|_{L^2(\mathbb{R}^n)}^2
&=&\int^2_1\Big\|\Big(\sum_{j\in\mathbb{Z}}\big|F_lf(\cdot,\,t)-F_{j}^lf(\cdot,\,t)\big|^2\Big)^{\frac{1}{2}}\Big\|_{L^2(\mathbb{R}^n)}^2{\rm
d}t\\
&=&\int^2_1\sum_{j\in\mathbb{Z}}\int_{\mathbb{R}^n}|\widehat{K_t^j}(\xi)|^2|1-\widehat{\phi_{j-l}}(\xi)|^2|\widehat{f}(\xi)|^2{\rm
d}\xi{\rm d}t\nonumber\\
&\lesssim&2^{-2\theta l}\|f\|_{L^2(\mathbb{R}^n)}^2.\nonumber
\end{eqnarray}

\begin{lemma}\label{l3.1}
Let $\Omega$ be
homogeneous of degree zero and belong to $L^q(S^{n-1})$ for some $q\in (1,\,\infty]$, $K_t^j$ be defined as in (2.1). Then for
$l\in\mathbb{N}$, $R>0$ and  $y\in \mathbb{R}^n$ with $|y|<R/4$,
\begin{eqnarray*}&&\sum_{j\in\mathbb{Z}}\Big(\int_{2^kR<|x|\leq 2^{k+1}R}\sup_{t\in [1,\,2]}\Big|K^j_{t}*\phi_{j-l}(x+y)-K^j_{t}*\phi_{j-l}(x)\Big|^{q}{\rm d}x\Big)^{\frac{1}{q}}\\
&&\quad\lesssim \frac{1}{(2^kR)^{n/q'}}\min\{1,\,2^l\frac{|y|}{2^kR}\}.\end{eqnarray*}
\end{lemma}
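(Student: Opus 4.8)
The plan is to bound, for each scale $j\in\mathbb{Z}$,
$$I_j:=\Big(\int_{2^kR<|x|\le 2^{k+1}R}\sup_{t\in[1,2]}\big|K^j_t*\phi_{j-l}(x+y)-K^j_t*\phi_{j-l}(x)\big|^q\,{\rm d}x\Big)^{1/q}$$
and then to observe that only $O(1)$ of the $I_j$ are non-zero; throughout I use the normalization $\|\Omega\|_{L^q(S^{n-1})}=1$ fixed in the text. The supremum in $t$ is handled by a single $t$-independent majorant: since $t\in[1,2]$ forces $\{2^{j-1}t<|z|\le 2^jt\}\subset\{2^{j-1}<|z|\le 2^{j+1}\}$, the function $\widetilde{K}^{j}(z):=2^{-j}|z|^{1-n}|\Omega(z)|\chi_{\{2^{j-1}<|z|\le 2^{j+1}\}}(z)$ satisfies $|K^j_t(z)|\le \widetilde{K}^{j}(z)$ for every $z$ and every $t\in[1,2]$, whence $\sup_{t\in[1,2]}|K^j_t*\Phi(x)|\le(\widetilde{K}^{j}*|\Phi|)(x)$ for any $\Phi\in L^1(\mathbb{R}^n)$. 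A polar-coordinate computation gives $\|\widetilde{K}^{j}\|_{L^q(\mathbb{R}^n)}\lesssim(2^j)^{-n/q'}$ (with $\|\widetilde{K}^{j}\|_{L^\infty}\lesssim(2^j)^{-n}$ when $q=\infty$). Since $(K^j_t*\phi_{j-l})(x+y)-(K^j_t*\phi_{j-l})(x)=K^j_t*(\phi_{j-l}(\cdot+y)-\phi_{j-l})(x)$, applying the majorant bound with $\Phi=\phi_{j-l}(\cdot+y)-\phi_{j-l}$, Young's inequality, and the two elementary estimates $\|\phi_{j-l}(\cdot+y)-\phi_{j-l}\|_{L^1}\le 2\|\phi\|_{L^1}$ and $\|\phi_{j-l}(\cdot+y)-\phi_{j-l}\|_{L^1}\le|y|\,\|\nabla\phi_{j-l}\|_{L^1}=2^{\,l-j}|y|\,\|\nabla\phi\|_{L^1}$ (the latter using $l\ge1$) yields
$$I_j\ \le\ \|\widetilde{K}^{j}\|_{L^q}\,\big\|\phi_{j-l}(\cdot+y)-\phi_{j-l}\big\|_{L^1}\ \lesssim\ (2^j)^{-n/q'}\min\{1,\,2^{\,l-j}|y|\}.$$

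For the support count, note that $K^j_t*\phi_{j-l}$ is supported in $\{2^{j-1}t-2^{j-l}/4<|z|\le 2^jt+2^{j-l}/4\}\subset\{2^{j-2}<|z|\le 2^{j+2}\}$, uniformly for $t\in[1,2]$ and $l\ge1$; together with $|y|<R/4$ this shows that the integrand defining $I_j$ vanishes on $\{2^kR<|x|\le 2^{k+1}R\}$ unless $|j-k-\log_2 R|\le C$ for some absolute constant $C$, i.e. unless $2^j\approx 2^kR$. Summing the displayed bound for $I_j$ over these $O(1)$ scales gives
$$\sum_{j\in\mathbb{Z}}I_j\ \lesssim\ (2^kR)^{-n/q'}\min\Big\{1,\,2^l\frac{|y|}{2^kR}\Big\},$$
which is the claim.

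The step I expect to be the real obstacle is the supremum in $t\in[1,2]$. The two obvious ways of removing it — differentiating in $t$, which replaces $K^j_t$ by a surface measure and so forces one to convolve with $\nabla\phi_{j-l}$ (or to estimate $\phi_{j-l}$ in $L^q$), or applying Minkowski's inequality in the angular variable — each lose an extra factor $2^{cl}$ with $c>0$, which is fatal since the target permits only the single factor $2^l$. Replacing the whole family $\{K^j_t\}_{t\in[1,2]}$ by the one $t$-independent majorant $\widetilde{K}^{j}$ \emph{before} any norm is taken sidesteps this, because then $l$ enters only through the cheap $L^1$-bound on $\phi_{j-l}(\cdot+y)-\phi_{j-l}$, which is worth exactly one power of $2^l$. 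The support count isolating the scales $2^j\approx 2^kR$ is then routine bookkeeping; it is there that the hypothesis $|y|<R/4$ is used.
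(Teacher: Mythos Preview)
Your proof is correct and follows essentially the same route as the paper's: replace $K^j_t$ by the $t$-independent majorant $\widetilde{K}^j$, use the support condition ${\rm supp}\,K^j_t*\phi_{j-l}\subset\{2^{j-2}\le|x|\le 2^{j+2}\}$ to restrict the sum to the $O(1)$ scales with $2^j\approx 2^kR$, and then apply Young's inequality. Your use of the $L^q\times L^1$ form of Young together with $\|\phi_{j-l}(\cdot+y)-\phi_{j-l}\|_{L^1}\lesssim\min\{1,2^{l-j}|y|\}$ is in fact cleaner than the paper's displayed $\|\widetilde{K}^j\|_{L^q}\|\phi_{j-l}(\cdot+y)-\phi_{j-l}\|_{L^{q'}}$, which appears to be a slip (that pairing bounds the $L^\infty$ norm, not the $L^q$ norm).
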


\begin{proof} We will employ the idea from \cite{wat}. It is obvious that
$$\|\phi_{j-l}(\cdot+y)-\phi_{j-l}(\cdot)\|_{L^{q'}(\mathbb{R}^n)}\lesssim 2^{(l-j)n/q}\min\{1,\,2^{l-j}|y|\}.$$Observe that
$$\sup_{t\in [1,\,2]}\Big|K^j_{t}*\phi_{j-l}(x+y)-K^j_{t}*\phi_{j-l}(x)\Big|\lesssim \int_{\mathbb{R}^n}\widetilde{K}^j(z)\big|\phi_{j-l}
(x+y-z)-\phi_{j-l}(x-z)\big|{\rm d}z,$$
with $\widetilde{K}^j(z)=|z|^{-n}|\Omega(z)|\chi_{\{2^{j-2}\leq |z|\leq 2^{j+2}\}}(z).$
Thus, by the fact ${\rm supp}\,K_t^j*\phi_{j-l}\subset \{x\in\mathbb{R}^n:\,2^{j-2}\leq |x|\leq 2^{j+2}\}$,  we deduce that
\begin{eqnarray*}
&&\sum_{j\in\mathbb{Z}}\Big(\int_{2^kR<|x|\leq 2^{k+1}R}\sup_{t\in[1,\,2]}\Big|K^j_{t}*\phi_{j-l}(x+y)-K^j_{t}*\phi_{j-l}(x)\Big|^{q}{\rm d}x\Big)^{\frac{1}{q}}\\
&&\quad\lesssim\sum_{j\in\mathbb{Z}:2^j\approx
2^kR}\|\widetilde{K}^j\|_{L^q(\mathbb{R}^n)}\|\phi_{j-l}(\cdot+y)-\phi_{j-l}(\cdot)\|_{L^{q'}(\mathbb{R}^n)}\\
&&\quad\lesssim 2^{jn/q'}\min\{1,\,2^{l}\frac{|y|}{2^kR}\}.
\end{eqnarray*}
This completes the proof of Lemma \ref{l3.1}.
\end{proof}

\begin{lemma}\label{l3.2}Let $\Omega$ be homogeneous of degree zero and have mean value zero.
Suppose that $\Omega\in L^q(S^{n-1})$ for some $q\in (1,\,\infty]$. Then for $l\in\mathbb{N}$, $\widetilde{\mathcal{M}}_{\Omega}^l$
is bounded  from $L^1(\mathbb{R}^n)$ to $L^{1,\,\infty}(\mathbb{R}^n)$ with bound $C_{n,\,p}l$.
\end{lemma}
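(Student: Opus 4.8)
The plan is to prove the weak $(1,1)$ bound for $\widetilde{\mathcal{M}}_{\Omega}^l$ by a Calder\'on--Zygmund decomposition, exploiting the vector-valued structure of the operator together with Lemma \ref{l3.1}. First I would fix $f\in L^1(\mathbb{R}^n)$ and $\lambda>0$, and perform the standard Calder\'on--Zygmund decomposition of $f$ at height $\lambda$: write $f=g+b$ where $\|g\|_{L^\infty}\lesssim\lambda$, $\|g\|_{L^1}\lesssim\|f\|_{L^1}$, $b=\sum_{Q}b_Q$ with each $b_Q$ supported in a dyadic cube $Q$ from the Whitney family $\{Q\}$, $\int b_Q=0$, $\|b_Q\|_{L^1}\lesssim\lambda|Q|$, and $\sum_Q|Q|\lesssim\lambda^{-1}\|f\|_{L^1}$. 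Since $\widetilde{\mathcal{M}}_{\Omega}^l$ is a sublinear (in fact, $L^2$-valued) operator, it suffices to estimate $\widetilde{\mathcal{M}}_{\Omega}^l g$ and $\widetilde{\mathcal{M}}_{\Omega}^l b$ separately. For the good part $g$, I would use the $L^2$ boundedness of $\widetilde{\mathcal{M}}_{\Omega}^l$: from the Fourier transform estimate (2.2) and Plancherel's theorem, $\widetilde{\mathcal{M}}_{\Omega}$ is bounded on $L^2$ with constant independent of $l$, hence so is $\widetilde{\mathcal{M}}_{\Omega}^l$ (here the $L^2$ bound is $O(1)$, not $O(l)$, because the smoothing by $\phi_{j-l}$ only helps). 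Thus $|\{x:\widetilde{\mathcal{M}}_{\Omega}^l g(x)>\lambda/2\}|\lesssim\lambda^{-2}\|g\|_{L^2}^2\lesssim\lambda^{-1}\|g\|_{L^1}\lesssim\lambda^{-1}\|f\|_{L^1}$.

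For the bad part, let $Q^*$ denote a fixed dilate (say $8\sqrt n\, Q$) of each Whitney cube and $E=\bigcup_Q Q^*$, so that $|E|\lesssim\lambda^{-1}\|f\|_{L^1}$. It remains to bound $|\{x\notin E:\widetilde{\mathcal{M}}_{\Omega}^l b(x)>\lambda/2\}|$, and by Chebyshev it suffices to show $\int_{\mathbb{R}^n\setminus E}\widetilde{\mathcal{M}}_{\Omega}^l b(x)\,\mathrm{d}x\lesssim l\,\|b\|_{L^1}\lesssim l\,\|f\|_{L^1}$. Using the vector-valued triangle inequality in $L^2([1,2]\times\mathbb Z)$,
$$\widetilde{\mathcal{M}}_{\Omega}^l b(x)\le\sum_{Q}\Big(\int_1^2\sum_{j\in\mathbb Z}\big|F_j^l b_Q(x,t)\big|^2\,\mathrm{d}t\Big)^{1/2}=\sum_Q\widetilde{\mathcal{M}}_{\Omega}^l b_Q(x),$$
so the task reduces to showing $\sum_Q\int_{\mathbb{R}^n\setminus Q^*}\widetilde{\mathcal{M}}_{\Omega}^l b_Q(x)\,\mathrm{d}x\lesssim l\sum_Q\|b_Q\|_{L^1}$. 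Fix $Q$ with center $c_Q$ and side length $\ell(Q)$, and write $R\sim\ell(Q)$. Using $\int b_Q=0$, for $x\notin Q^*$ we have $F_j^l b_Q(x,t)=\int_Q\big(K_t^j*\phi_{j-l}(x-y)-K_t^j*\phi_{j-l}(x-c_Q)\big)b_Q(y)\,\mathrm{d}y$. I would then split $\mathbb{R}^n\setminus Q^*=\bigcup_{k\ge k_0}\{2^k R<|x-c_Q|\le 2^{k+1}R\}$ (annuli around $c_Q$), apply Minkowski's and H\"older's inequalities (with exponents $q,q'$) on each annulus, and invoke Lemma \ref{l3.1} with $y$ replaced by $y-c_Q$ (noting $|y-c_Q|\lesssim R\lesssim 2^k R$). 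This gives, for each annulus,
$$\int_{2^kR<|x-c_Q|\le 2^{k+1}R}\widetilde{\mathcal M}_\Omega^l b_Q(x)\,\mathrm dx\lesssim (2^kR)^{n/q'}\cdot\frac{1}{(2^kR)^{n/q'}}\min\Big\{1,\,2^l\frac{R}{2^kR}\Big\}\|b_Q\|_{L^1}=\min\{1,\,2^{l-k}\}\|b_Q\|_{L^1}.$$
Summing over $k\ge k_0$: the terms with $k\le l$ contribute $O(l)$ copies of $\|b_Q\|_{L^1}$, and the terms with $k>l$ contribute $\sum_{k>l}2^{l-k}\|b_Q\|_{L^1}=O(1)\|b_Q\|_{L^1}$. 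Hence $\int_{\mathbb{R}^n\setminus Q^*}\widetilde{\mathcal{M}}_{\Omega}^l b_Q(x)\,\mathrm{d}x\lesssim l\,\|b_Q\|_{L^1}$, and summing over $Q$ finishes the bad part.

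The main obstacle, and the reason the factor $l$ appears, is precisely this logarithmic loss in the sum over the annuli: the kernel difference estimate in Lemma \ref{l3.1} only saturates the cancellation for $2^k R\gtrsim 2^l R$, i.e. the smoothing scale $2^{j-l}\cdot 2^l=2^j$ matches the kernel support only after $l$ dyadic scales, so one cannot do better than $O(l)$ without further cancellation between different $j$'s (which is not available once one takes absolute values inside the $\ell^2$ sum). Two technical points need care: first, that the $L^2$ operator norm of $\widetilde{\mathcal{M}}_{\Omega}^l$ is genuinely $O(1)$ and not $O(l)$ (this is immediate from (2.2) and Plancherel, since $|\widehat{\phi_{j-l}}|\lesssim 1$), so the good part causes no loss; and second, the bookkeeping of the geometry — that for $x\notin Q^*$ the point $x-c_Q$ lies in one of the stated annuli and the mean-zero subtraction is legitimate because $\widetilde K^j*\phi_{j-l}$ is integrable. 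One should also verify the harmless point that only finitely many (in fact $O(1)$) values of $j$ contribute to $F_j^l b_Q(x,t)$ for each $x$ in a given annulus, which is already built into Lemma \ref{l3.1} via the support restriction $2^j\approx 2^k R$.
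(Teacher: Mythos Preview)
Your proposal is correct and follows essentially the same route as the paper: Calder\'on--Zygmund decomposition, $L^2$ boundedness (with $O(1)$ constant) for the good part, and for the bad part the mean-zero subtraction combined with Lemma~\ref{l3.1} on dyadic annuli, the H\"older step converting the $L^q$ kernel bound to an $L^1$ bound, and the split $k\le l$ versus $k>l$ producing the factor $l$. The paper's proof is organized identically, with only cosmetic differences in the choice of dilate $Q^*$ and the order in which Minkowski and the sup over $t$ are taken.
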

\begin{proof} The proof is fairly standard. For the sake of self-contained, we present the proof here.  Our goal is to show that for any $\lambda>0$,
\begin{eqnarray}\big|\big\{x\in\mathbb{R}^n:\,\widetilde{\mathcal{M}}_{\Omega}^lf(x)>\lambda\big\}\big|\lesssim l\lambda^{-1}\|f\|_{L^1(\mathbb{R}^n)}.\end{eqnarray}
For each fixed $\lambda>0$,  applying the Calder\'on-Zygmund decomposition to $|f|$ at level $\lambda$,
we obtain a sequence of cubes $\{Q_i\}$ with disjoint interiors, such that
$$\lambda<\frac{1}{|Q_i|}\int_{Q_i}|f(y)|{\rm d}y\le 2^n\lambda,$$
and  $|f(y)|\lesssim \lambda$ for a. e. $y\in\mathbb{R}^n\backslash\big(\cup_{i}Q_i\big).$ Set
$$g(y)=f(y)\chi_{\mathbb{R}^n\backslash\cup_iQ_i}(y)+\sum_{i}\langle f\rangle_{Q_i}\chi_{Q_i}(y),$$
$$b(y)=\sum_{i}b_i(y),\,\,\hbox{with}\,\,
b_i(y)=\big(b_{i}(y)-\langle f\rangle_{Q_i}\big)\chi_{Q_i}(y).$$
By (2.7) and the $L^2(\mathbb{R}^n)$ boundedness of $\widetilde{\mathcal{M}}_{\Omega}$, we know that $\widetilde{\mathcal{M}}_{\Omega}^l$ is also bounded on $L^2(\mathbb{R}^n)$ with bound independent of $l$. Therefore,
$$|\{x\in\mathbb{R}^n:\,\widetilde{\mathcal{M}}_{\Omega}^lg(x)>\lambda/2\}|\lesssim\lambda^{-2}
\|\widetilde{\mathcal{M}}_{\Omega}^lg\|_{L^2(\mathbb{R}^n)}^2\lesssim\lambda^{-1}\|f\|_{L^1(\mathbb{R}^n)}.$$
Let $E_{\lambda}=\cup_i4nQ_i$. It is obvious that $|E_{\lambda}|\lesssim \lambda^{-1}\|f\|_{L^1(\mathbb{R}^n)}$. The proof of (2.8) is now reduced to proving that
\begin{eqnarray}|\{x\in\mathbb{R}^n\backslash E_{\lambda}:\,\widetilde{\mathcal{M}}_{\Omega}^lb(x)>\lambda/2\}|\lesssim l\lambda^{-1}\|f\|_{L^1(\mathbb{R}^n)}.
\end{eqnarray}

We now prove (2.9). For each fixed cube $Q_i$, let $y_i$ be the center of $Q_i$. For $x,\,y,\,z\in\mathbb{R}^n$,  set
$$S_{t}^{j,\,l}(x;\,y,\,z)=|K_t^j*\phi_{j-l}(x-y)-K_t^j*\phi_{j-l}(x-z)|.
$$
A trivial computation involving  Minkowski's inequality and vanishing moment of $b_i$ gives us that for $x\in \mathbb{R}^n$,
\begin{eqnarray*}
\widetilde{\mathcal{M}}_{\Omega}^lb(x)&\le &\sum_{i}\Big(\int^2_1\sum_j\Big(
\int_{\mathbb{R}^n}S_{t}^{j,\,l}(x;\,y,\,y_i)|b_i(y)|{\rm d}y\Big)^2{\rm d}t\Big)^{\frac{1}{2}}\\
&\le &\sum_i\sum_j\int_{\mathbb{R}^n}\Big(\int^2_1\{S_{t}^{j,\,l}(x;\,y,\,y_i)\}^2{\rm d}t\Big)^{\frac{1}{2}}|b_i(y)|{\rm d}y\\
&\le&\sum_i\sum_j\int_{\mathbb{R}^n}\sup_{t\in [1,\,2]}S_{t}^{j,\,l}(x;\,y,\,y_i)|b_i(y)|{\rm d}y.
\end{eqnarray*}
On the other hand, we get from Lemma \ref{l3.1} that
\begin{eqnarray*}
&&\sum_{j\in\mathbb{Z}}\int_{\mathbb{R}^n\backslash E_{\lambda}}\sup_{t\in[1,2]}S_{t}^{j,l}(x;y,y_i){\rm d}x\\
&&\quad=\sum_{k=1}^{l}\sum_{j\in\mathbb{Z}}\int_{2^{k+2} nQ_i\backslash 2^{k+1}nQ_i}\sup_{t\in[1,2]}S_{t}^{j,l}(x;y,y_i){\rm d}x\\
&&\qquad+\sum_{k=l+1}^{\infty}\sum_{j\in\mathbb{Z}}\int_{2^{k+2} nQ_i\backslash 2^{k+1}nQ_i}\sup_{t\in[1,2]}S_{t}^{j,\,l}(x;\,y,\,y_i){\rm d}x\lesssim l.
\end{eqnarray*}
This, in turn leads to that
\begin{eqnarray*}
\int_{\mathbb{R}^n\backslash E_{\lambda}}\widetilde{\mathcal{M}}_{\Omega}^lb(x){\rm d}x&\le& \sum_{i}\sum_j\int_{\mathbb{R}^n}\int_{\mathbb{R}^n\backslash E_{\lambda}}\sup_{t\in[1,2]}S_{t}^{j,\,l}(x;\,y,\,y_i){\rm d}x|b_i(y)|{\rm d}y\\
&\lesssim&l\int_{\mathbb{R}^n}|f(y)|{\rm d}y.
\end{eqnarray*}
The inequality (2.9) now follows directly. This completes the proof of Lemma \ref{l3.2}.
\end{proof}
\begin{lemma}\label{l3.3}Let $T$ be a sublinear operator. Set
$$T^*f(x)=\sup_{Q\ni x}\sup_{\xi\in Q}|T(f\chi_{\mathbb{R}^n\backslash 3Q})(\xi)|.$$ Suppose that for $1\leq s\leq r<\infty$,
$T$ is bounded from $L^{s}(\mathbb{R}^n)$ to $L^{s,\,\infty}(\mathbb{R}^n)$ and $T^*$ is bounded from $L^{r}(\mathbb{R}^n)$ to $L^{r,\,\infty}(\mathbb{R}^n)$.  Then, for every compactly supported function $f\in L^r(\mathbb{R}^n)$, there exists  $\sigma$-sparse families $\mathcal{S}_j\subset\mathscr{D}_j$ with $\mathscr{D}_j$ general dyadic grid and $j=1,\,\dots,\,3^n$, such that for a.\,e. $x\in \mathbb{R}^n$,
\begin{eqnarray}\quad|Tf(x)| \leq C_{n,s,r}\big(\|T\|_{L^s\rightarrow L^{s,\infty}}+\|T^*\|_{L^r\rightarrow L^{r,\infty}}\big)\sum_{j=1}^{3^n}\mathcal{A}_{\mathcal{S}_j,\,r}(|f|)(x),
\end{eqnarray}
where $\sigma=\sigma_n\in (0,\,1)$ and $\mathcal{A}_{\mathcal{S},\,r}f(x)=\sum_{Q\in\mathcal{S}}\langle |f|^r\rangle_Q^{\frac{1}{r}}\chi_{Q}(x).$
\end{lemma}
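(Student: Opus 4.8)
The plan is to prove Lemma \ref{l3.3} by the recursive (stopping-time) construction of a sparse family, following Lerner \cite{ler1,ler3}. Fix a dyadic grid $\mathscr{D}$ and set $C_1:=C_{n,s,r}\big(\|T\|_{L^s\to L^{s,\infty}}+\|T^*\|_{L^r\to L^{r,\infty}}\big)$ with $C_{n,s,r}$ to be chosen large. Since $f$ is compactly supported and in $L^r$, it lies in $L^s$ as well ($s\le r$), so every $T(f\chi_{3Q})\in L^{s,\infty}$ is finite a.e. I would first reduce matters to the following local estimate: for each $Q_0\in\mathscr{D}$ with $\mathrm{supp}\,f\subset 3Q_0$ there is a $\tfrac12$-sparse family $\mathcal{S}\subset\mathscr{D}(Q_0)$ of dyadic subcubes of $Q_0$ such that $|Tf(x)|\chi_{Q_0}(x)\le C_1\sum_{Q\in\mathcal{S}}\langle|f|^r\rangle_{3Q}^{1/r}\chi_Q(x)$ for a.e. $x$ (note $Tf=T(f\chi_{3Q_0})$ in this setting).

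To construct $\mathcal{S}$, run a recursion started at $Q_0$. Given a cube $Q$ in the current generation, put $a_Q:=\langle|f|^r\rangle_{3Q}^{1/r}$ and
$$\mathcal{E}_Q:=\big\{x\in Q:|T(f\chi_{3Q})(x)|>C_1a_Q\big\}\cup\big\{x\in Q:T^*(f\chi_{3Q})(x)>C_1a_Q\big\}.$$
By Chebyshev's inequality, the weak $(s,s)$ bound for $T$ and the weak $(r,r)$ bound for $T^*$, together with H\"older's inequality (this is where $s\le r$ enters, to pass from $\langle|f|^s\rangle_{3Q}^{1/s}$ to $a_Q$), one gets $|\mathcal{E}_Q|\le 2^{-n-2}|Q|$ once $C_{n,s,r}$ is large enough. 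Apply the local Calder\'on-Zygmund decomposition to $\chi_{\mathcal{E}_Q}$ inside $Q$ at height $2^{-n-1}$: this yields pairwise disjoint maximal dyadic cubes $\{P\}\subset\mathscr{D}(Q)$, $P\subsetneq Q$, with $2^{-n-1}|P|<|\mathcal{E}_Q\cap P|\le\tfrac12|P|$, covering $\mathcal{E}_Q$ up to a null set and with $\sum_P|P|\le 2^{n+1}|\mathcal{E}_Q|\le\tfrac12|Q|$; declare these the children of $Q$. The key is the one-step pointwise bound: on $Q\setminus\bigcup_PP$ we have $x\notin\mathcal{E}_Q$, so $|T(f\chi_{3Q})(x)|\le C_1a_Q$, while for $x\in P$, choosing $\xi_P\in P\setminus\mathcal{E}_Q$ (possible since $|\mathcal{E}_Q\cap P|\le\tfrac12|P|$) and splitting $f\chi_{3Q}=f\chi_{3P}+(f\chi_{3Q})\chi_{\mathbb{R}^n\setminus 3P}$, sublinearity and the definition of $T^*$ give $|T(f\chi_{3Q})(x)|\le|T(f\chi_{3P})(x)|+T^*(f\chi_{3Q})(\xi_P)\le|T(f\chi_{3P})(x)|+C_1a_Q$. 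Summing over the disjoint cubes of each generation $\mathcal{G}_k$ (with $\mathcal{G}_0=\{Q_0\}$) and iterating, the leftover term after $N$ steps is supported on a set of measure $\le 2^{-N-1}|Q_0|$ and finite a.e., hence vanishes a.e. as $N\to\infty$ by Borel-Cantelli; this gives the local estimate with $\mathcal{S}=\bigcup_k\mathcal{G}_k$, which is $\tfrac12$-sparse (take $E_Q=Q\setminus\bigcup\{\text{children of }Q\}$).

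Next I would globalize: since $f$ is compactly supported, a routine exhaustion argument — run the recursion on a large $Q_0$ with $\mathrm{supp}\,f\subset 3Q_0$, then extend it outward using that $T^*$ controls $Tf$ at points far from $\mathrm{supp}\,f$, and check the extra cubes still form a sparse family, exactly as in \cite{ler3} — yields the bound for a.e. $x\in\mathbb{R}^n$. Finally, to replace $\langle|f|^r\rangle_{3Q}$ by $\langle|f|^r\rangle_{Q'}$ over a genuine dyadic cube I would invoke the one-third trick: each $3Q$ sits inside a cube $Q'$ of one of the $3^n$ shifted dyadic grids $\mathscr{D}_1,\dots,\mathscr{D}_{3^n}$ with $\ell(Q')\lesssim_n\ell(Q)$, so $\langle|f|^r\rangle_{3Q}^{1/r}\lesssim_n\langle|f|^r\rangle_{Q'}^{1/r}$ and $\chi_Q\le\chi_{Q'}$; sorting the cubes of $\mathcal{S}$ by the grid of their associated $Q'$ produces $3^n$ $\sigma_n$-sparse families $\mathcal{S}_j\subset\mathscr{D}_j$ and the asserted inequality with constant $C_1$.

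I expect the main obstacle to be the one-step pointwise bound in the second paragraph: to make the recursion close one must estimate $T(f\chi_{3Q})$ on each child $P$ by $T(f\chi_{3P})$ — which can be fed back into the recursion — plus a controllable error, and it is precisely here that the grand maximal truncation $T^*$, rather than $T$ itself, must be used. This forces $T^*$ into the stopping set $\mathcal{E}_Q$ from the outset and hence the joint bookkeeping of the two weak-type bounds at the (a priori) different exponents $s\le r$. By contrast, the exhaustion and one-third-trick steps are routine and are carried out in \cite{ler3}.
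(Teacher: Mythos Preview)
The paper does not prove this lemma at all; it simply refers the reader to Theorem~4.2 and Remark~4.3 in Lerner~\cite{ler3}. Your proposal is a correct and faithful sketch of precisely that recursive sparse-domination argument, so the approaches coincide and there is nothing further to compare.
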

For the proof of Lemma \ref{l3.3}, see  Theorem 4.2 and Remark 4.3 in \cite{ler3}.
\begin{lemma}\label{l3.4} Let $\Omega$ be homogeneous of degree zero and have mean value zero.
Suppose that $\Omega\in L^q(S^{n-1})$ for some $q\in (1,\,\infty]$. If $p\in (q',\,\infty]$ and $w\in A_{p/q'}(\mathbb{R}^n)$, then
for $l\in\mathbb{N}$,
$$\|\widetilde{\mathcal{M}}_{\Omega}^lf\|_{L^p(\mathbb{R}^n,\,w)}\lesssim l\{w\}_{A_{p/q'},\,p}\|f\|_{L^p(\mathbb{R}^n,\,w)}.$$
\end{lemma}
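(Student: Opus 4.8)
The plan is to combine the sparse domination machinery of Lemma \ref{l3.3} with the weighted bound for sparse operators in Lemma \ref{l2.1}, using $r=q'$ as the exponent throughout. First I would verify the two hypotheses of Lemma \ref{l3.3} for the operator $T=\widetilde{\mathcal{M}}_{\Omega}^l$ with $s=1$ and $r=q'$. The weak $(1,1)$ bound with constant $C_{n,p}l$ is exactly Lemma \ref{l3.2}. For the second hypothesis I need that the maximal truncation
$$\big(\widetilde{\mathcal{M}}_{\Omega}^l\big)^*f(x)=\sup_{Q\ni x}\sup_{\xi\in Q}\big|\widetilde{\mathcal{M}}_{\Omega}^l(f\chi_{\mathbb{R}^n\setminus 3Q})(\xi)\big|$$
is bounded from $L^{q'}$ to $L^{q',\infty}$ with a constant controlled by $Cl$ (or at worst $Cl$ times an absolute constant). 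Here I would argue as in the proof of Lemma \ref{l3.2}: for $\xi\in Q$ and $f$ supported off $3Q$, the vanishing-moment / smoothing structure of $K_t^j*\phi_{j-l}$ lets me replace the value at $\xi$ by the value at the center of $Q$ up to an error controlled by $M_{q'}f$ (the $L^{q'}$-Hardy--Littlewood maximal function), where the telescoping over the annuli $2^kR<|x|\le 2^{k+1}R$ combined with Lemma \ref{l3.1} produces the factor $l$ from the $\min\{1,2^l|y|/(2^kR)\}$ bound summed over $k=1,\dots,l$ and the geometric tail for $k>l$; meanwhile $\widetilde{\mathcal{M}}_{\Omega}^l$ itself applied to $f\chi_{3Q}$ contributes $M_{q'}f$ via its $L^{q'}$-boundedness (which follows by interpolating the $L^2$ bound of (2.7) against the weak $(1,1)$ bound of Lemma \ref{l3.2}, since $q'\in[1,2]$ when $q\ge 2$, and for $q<2$ one uses instead the $L^{q'}$ bound that holds for all $q'>1$ by the standard rough-kernel theory together with (2.2)). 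Thus $(\widetilde{\mathcal{M}}_{\Omega}^l)^*$ is dominated pointwise by $Cl\,M_{q'}f + \widetilde{\mathcal{M}}_{\Omega}^l f$, giving the required $L^{q'}\to L^{q',\infty}$ bound with constant $\lesssim l$.

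Granting the hypotheses, Lemma \ref{l3.3} with $s=1$, $r=q'$ yields $\sigma_n$-sparse families $\mathcal{S}_1,\dots,\mathcal{S}_{3^n}$ in general dyadic grids such that
$$\widetilde{\mathcal{M}}_{\Omega}^lf(x)\le C_n\big(\|\widetilde{\mathcal{M}}_{\Omega}^l\|_{L^1\to L^{1,\infty}}+\|(\widetilde{\mathcal{M}}_{\Omega}^l)^*\|_{L^{q'}\to L^{q',\infty}}\big)\sum_{j=1}^{3^n}\mathcal{A}_{\mathcal{S}_j,\,q'}(|f|)(x)\lesssim l\sum_{j=1}^{3^n}\mathcal{A}_{\mathcal{S}_j,\,q'}(|f|)(x)$$
for compactly supported $f\in L^{q'}$ (the general case following by the usual density/monotone limit argument, and by noting $\widetilde{\mathcal{M}}_{\Omega}^l$ only depends on $f$ through finitely many dilates). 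Now observe that $\mathcal{A}_{\mathcal{S},q'}(|f|)(x)=\big(\mathcal{A}_{\mathcal{S}}^{q'}(|f|^{q'})(x)\big)^{1/q'}$ if we write things carefully — more precisely $\langle|f|^{q'}\rangle_Q^{1/q'}$ is the $Q$-average of $g:=|f|^{q'}$ raised to the $1/q'$ power — so $\|\mathcal{A}_{\mathcal{S},q'}(|f|)\|_{L^p(w)}^{q'}=\|\mathcal{A}_{\mathcal{S}}^{q'}g\|_{L^{p/q'}(w)}$ with $g=|f|^{q'}$. Applying Lemma \ref{l2.1} with the pair $(p/q',\,q')$ in place of $(p,r)$, which is legitimate since $p/q'>1$ and $q'<p/q'\cdot q'=p$... one checks $q'<p$ so $q'\in(0,p/q')$ fails in general; instead I apply Lemma \ref{l2.1} with exponent $p/q'$ and inner exponent $r=1$? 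No — the correct reading: Lemma \ref{l2.1} is stated for $\mathcal{A}_{\mathcal{S}}^r$ with $r\in(0,p)$, and here the outer Lebesgue exponent is $p/q'$ and the sparse operator is $\mathcal{A}_{\mathcal{S}}^{1}$ acting on $g$; hence take $r=1<p/q'$ to get $\|\mathcal{A}_{\mathcal{S}}g\|_{L^{p/q'}(w)}\lesssim [w]_{A_{p/q'}}^{q'/p}\big([w]_{A_\infty}^{1-q'/p}+[w^{-1/(p/q'-1)}]_{A_\infty}^{q'/p}\big)\|g\|_{L^{p/q'}(w)}$. Raising to the power $1/q'$ and using $\|g\|_{L^{p/q'}(w)}^{1/q'}=\|f\|_{L^p(w)}$ turns the weight constant into exactly $[w]_{A_{p/q'}}^{1/p}\big([w]_{A_\infty}^{1/q'-1/p}+[w^{1-(p/q')'}]_{A_\infty}^{1/p}\big)^{1/q'}$; recognizing this (via $1/q'-1/p$ and $1/p$ as the two exponents against the maximum defining $\{w\}_{A_{p/q'},p}$, after a short manipulation with $p\ge 1$) gives a bound $\lesssim\{w\}_{A_{p/q'},p}$. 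Summing the $3^n$ pieces and carrying the factor $l$ completes the proof.

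The main obstacle I expect is the second hypothesis of Lemma \ref{l3.3}: establishing that $(\widetilde{\mathcal{M}}_{\Omega}^l)^*$ maps $L^{q'}\to L^{q',\infty}$ with a constant that is only $O(l)$ (and independent of the truncation parameters $R$). The smoothing convolution with $\phi_{j-l}$ is what makes this work — it both restores enough regularity to run a Kolmogorov–Seliverstov–Plessner / Cotlar-type comparison between the maximal truncation and the operator itself, and it is responsible for the linear-in-$l$ growth through the annular sum in Lemma \ref{l3.1}. Care is also needed in the bookkeeping that turns the output of Lemma \ref{l2.1} into the precise constant $\{w\}_{A_{p/q'},p}$; this is where the exponent $p$ (rather than $q'$) in the subscript of $\{w\}$ comes from, and it requires writing $1/q'-1/p=(1/q')(1-q'/p)$ and matching $r'=p$ in the definition of $\{w\}_{A_{p/q'},p}=[w]_{A_{p/q'}}^{1/p}\max\{[w]_{A_\infty}^{1/p'},[w^{1-(p/q')'}]_{A_\infty}^{1/p}\}$, using the elementary inequality $a^{1/q'}+b^{1/q'}\le 2\max\{a,b\}^{1/q'}$ to collapse the sum into a maximum. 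Everything else is routine.
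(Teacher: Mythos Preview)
Your overall strategy---verify the two hypotheses of Lemma~\ref{l3.3} with $s=1$, $r=q'$, then feed the resulting sparse domination into Lemma~\ref{l2.1}---is exactly the paper's. Your identification of the $L^{q'}\to L^{q',\infty}$ bound for $(\widetilde{\mathcal{M}}_{\Omega}^l)^*$ as the main obstacle is also correct, and the pointwise domination $(\widetilde{\mathcal{M}}_{\Omega}^l)^*f\lesssim l\,M_{q'}f+\widetilde{\mathcal{M}}_{\Omega}^lf$ you assert does hold. However, your justification for it is not right: you cannot control $\widetilde{\mathcal{M}}_{\Omega}^l(f\chi_{3Q})(x)$ pointwise by $M_{q'}f(x)$ ``via its $L^{q'}$-boundedness''---that sum over $j$ with $2^j\lesssim\mathrm{diam}\,Q$ is infinite and has no obvious pointwise bound. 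The paper avoids this by a second triangle inequality: after writing $\widetilde{\mathcal{M}}_{\Omega}^l(f\chi_{\mathbb{R}^n\setminus B_x})(x)\le\widetilde{\mathcal{M}}_{\Omega}^lf(x)+(\int_1^2\sum_{2^j\le 4\,\mathrm{diam}\,Q}|F_j^l(f\chi_{B_x})|^2\,dt)^{1/2}$, it splits $f\chi_{B_x}=f-f\chi_{\mathbb{R}^n\setminus B_x}$ once more, so that the tail with $f$ is a truncated square function $\mathcal{N}^{l,j_0}f$ (dominated by $\widetilde{\mathcal{M}}_{\Omega}^lf$ after taking $\sup_Q$) and the remaining piece with $f\chi_{\mathbb{R}^n\setminus B_x}$ has only $O(1)$ nonzero summands, hence is $\lesssim M_{q'}f$.

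The genuine gap is in your application of Lemma~\ref{l2.1}. After several attempts you settle on $r=1$, asserting that the relevant sparse operator is $\mathcal{A}_{\mathcal{S}}^{1}$ acting on $g=|f|^{q'}$. This identification is false: by definition $\mathcal{A}_{\mathcal{S}}^{r}h=\big(\sum_Q\langle h\rangle_Q^{\,r}\chi_Q\big)^{1/r}$, so
\[
\big[\mathcal{A}_{\mathcal{S},q'}(|f|)(x)\big]^{q'}=\Big(\sum_{Q}\langle g\rangle_Q^{1/q'}\chi_Q(x)\Big)^{q'}=\mathcal{A}_{\mathcal{S}}^{1/q'}(g)(x),
\]
which is neither $\mathcal{A}_{\mathcal{S}}^{q'}g$ nor $\mathcal{A}_{\mathcal{S}}^{1}g$. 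The correct choice is $r=1/q'$ (and outer exponent $p/q'$); then $1/r-q'/p=q'-q'/p=q'/p'$, and raising to the power $1/q'$ gives exactly $[w]_{A_{p/q'}}^{1/p}\big([w]_{A_\infty}^{q'/p'}+[\sigma]_{A_\infty}^{q'/p}\big)^{1/q'}\lesssim\{w\}_{A_{p/q'},p}$. With $r=1$ no such identity is available, and the inequality $\sum_Q\langle g\rangle_Q^{1/q'}\chi_Q\le\big(\sum_Q\langle g\rangle_Q\chi_Q\big)^{1/q'}$ goes the wrong way (since $\ell^{1/q'}\hookrightarrow\ell^1$ for $q'\ge 1$).
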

\begin{proof}  For $j_0\in\mathbb{Z}$ and $l\in\mathbb{N}$, let $\mathcal{N}^{l,\,j_0}_{\Omega}$ be the operator defined by
$$\mathcal{N}^{l,\,j_0}_{\Omega}f(x)=\Big(\int_1^2\sum_{j\leq j_0}|F_j^lf(x,\,t)|^2{\rm d}t\Big)^{1/2}.$$
We claim that for $p\in (1,\,\infty)$,
\begin{eqnarray}
\|\mathcal{N}^{l,\,j_0}_{\Omega}f\|_{L^p(\mathbb{R}^n)}\lesssim l\|f\|_{L^p(\mathbb{R}^n)}.
\end{eqnarray}
To this aim, we first note that if ${\rm supp}\,f\subset Q$ for a cube $Q$ having side length $2^{j_0}$, then
${\rm supp}\,\mathcal{N}^{l,\,j_0}_{\Omega}f\subset 20\sqrt n Q$. On the other hand, if $\{Q_k\}_k$ is a sequence of cubes with disjoint interiors and having side length $2^{j_0}$, then the cubes $\{20\sqrt nQ_k\}_k$ have bounded overlaps. Thus, we may assume that ${\rm supp}\,f\subset Q$ with $Q$ a cube  having side length $2^{j_0}$. For such a $f\in L^p(\mathbb{R}^n)$, we see that if $x\in 20\sqrt nQ$, then
$$\Big(\int_{1}^2\sum_{j>j_0+20n}|F_j^lf(x,\,t)|^2{\rm d}t\Big)^{1/2}=0.$$
Therefore, for $x\in 20\sqrt nQ$,
\begin{eqnarray*}\mathcal{N}_{\Omega}^{l,j_0}f(x)\leq\widetilde{\mathcal{M}}_{\Omega}^lf(x)+
\Big(\int_1^2\sum_{j_0<j<j_0+20n}|F_j^lf(x,\,t)|^2{\rm d}t\Big)^{\frac{1}{2}}\lesssim\widetilde{\mathcal{M}}_{\Omega}^lf(x)+M_{\Omega}Mf(x),
\end{eqnarray*}
where and in the following, $M_{\Omega}$ is the maximal operator defined by
$$M_{\Omega}h(x)=\sup_{r>0}\frac{1}{|B(x,\,r)|}\int_{B(x,\,r)}|\Omega(x-y)||h(y)|{\rm d}y.$$By the method of rotation of Calder\'on and Zygmund, we know that $M_{\Omega}$ is bounded on $L^p(\mathbb{R}^n)$. The desired $L^p(\mathbb{R}^n)$ estimate for $\mathcal{N}_{\Omega}^{l,\,j_0}$ then follows directly.

We now prove that the operator
$$S^lf(x)=\sup_{Q\ni x}\sup_{\xi\in Q}|\widetilde{\mathcal{M}}_{\Omega}^l(f\chi_{\mathbb{R}^n\backslash 3Q})(\xi)|$$
is bounded from $L^{q'}(\mathbb{R}^n)$ to $L^{q',\infty}(\mathbb{R}^n)$ with bound $Cl$. In fact,
let $Q\subset \mathbb{R}^n$ be a cube and
$x,\,\xi\in Q$, and denote by $B_x$ the closed ball centered
at $x$ with  radius $2 {\rm diam}\,Q$. Then $3Q\subset B_x$, and we can write
\begin{eqnarray*}|\widetilde{\mathcal{M}}_{\Omega}^l(f\chi_{R^n\backslash 3Q})(\xi)| &\leq & |\widetilde{\mathcal{M}}_{\Omega}^l(f\chi_{\mathbb{R}^n\backslash B_x})(\xi)- \widetilde{\mathcal{M}}_{\Omega}^l(f\chi_{\mathbb{R}^n\backslash B_x})(x)
|\\
&&+ |\widetilde{\mathcal{M}}_{\Omega}^l(f\chi_{B_x\backslash 3Q})(\xi)| + |\widetilde{\mathcal{M}}_{\Omega}^l(f\chi_{\mathbb{R}^n\backslash B_x})(x)|.
\end{eqnarray*}
It is obvious that
\begin{eqnarray*}
&&|\widetilde{\mathcal{M}}_{\Omega}^l(f\chi_{\mathbb{R}^n\backslash B_x})(\xi)- \widetilde{\mathcal{M}}_{\Omega}^l(f\chi_{\mathbb{R}^n\backslash B_x})(x)
|\\
&&\quad\leq \Big(\int^2_1\sum_{j\in\mathbb{Z}}\Big|\int_{\mathbb{R}^n}R_{t}^{j,\,l}(x;\,y,\,\xi)f(y)\chi_{\mathbb{R}^n\backslash B_x}(y){\rm d}y\Big|^2{\rm d}t\Big)^{\frac{1}{2}},
\end{eqnarray*}
with
$$R_{t}^{j,\,l}(x;\,y,\,\xi)=|K_t^j*\phi_{j-l}(x-y)-K_t^j*\phi_{j-l}(\xi-y)|.$$
A trivial computation involving H\"older's inequality gives us that
\begin{eqnarray}
&&\sum_{j\in\mathbb{Z}}\Big|\int_{\mathbb{R}^n}R_{t}^{j,\,l}(x;\,y,\,\xi)f(y)\chi_{\mathbb{R}^n\backslash B_x}(y){\rm d}y\Big|\\
&&\quad\leq \sum_j\sum_{k=1}^{\infty}\Big(\int_{2^kB_x\backslash 2^{k-1}B_x}\sup_{t\in[1,2]}|R_{t}^{j,\,l}(x;y,\xi)|^q{\rm d}y\Big)^{\frac{1}{q}}\Big(\int_{2^kB_x}|f(y)|^{q'}{\rm d}y\Big)^{\frac{1}{q'}}\nonumber\\
&&\quad\lesssim lM_{q'}f(x),\nonumber
\end{eqnarray}
where $M_{q'}f(x)=\{M(|f|^{q'})(x)\big\}^{1/q'}$. For each fixed  $t\in [1,\,2]$ and $j\in\mathbb{Z}$ with $2^j\approx {\rm diam}\,Q$,
\begin{eqnarray}|F_j^l(f\chi_{B_x\backslash 3Q})(x,\,t)|\le \|K^j_t*\phi_{l-j}\|_{L^q(\mathbb{R}^n)}\|f\chi_{B_x}\|_{L^{q'}(\mathbb{R}^n)}\lesssim M_{q'}f(x).
\end{eqnarray}
Recall that ${\rm supp}\,K_t^j*\phi_{j-l}\subset \{x\in\mathbb{R}^n:\,2^{j-2}\leq |x|\leq 2^{j+2}\}$. It then follows that
\begin{eqnarray}
|\widetilde{\mathcal{M}}_{\Omega}^l(f\chi_{B_x\backslash 3Q})(\xi)|&=&\Big(\int^2_1\sum_j|F_j^l(f\chi_{B_x\backslash 3Q})(x,\,t)|^2{\rm d}t\Big)^{\frac{1}{2}}\\
&\le &\sum_{j:2^j\approx {\rm diam}\,Q}\Big(\int^2_1|F_j^l(f\chi_{B_x\backslash 3Q})(x,\,t)|^2{\rm d}t\Big)^{\frac{1}{2}}\nonumber\\
&\lesssim  & M_{q'}f(x).\nonumber
\end{eqnarray}
To estimate $\widetilde{\mathcal{M}}_{\Omega}^l(f\chi_{\mathbb{R}^n\backslash B_x})$, write
\begin{eqnarray*}
\widetilde{\mathcal{M}}_{\Omega}^l(f\chi_{\mathbb{R}^n\backslash B_x})(x)&\leq & \widetilde{\mathcal{M}}_{\Omega}^lf(x)+\Big(\int^2_1\sum_{j\in \mathbb{Z}}|F_j^l(f\chi_{B_x})(x,\,t)|^2{\rm d}t\Big)^{\frac{1}{2}}\\
&=&\widetilde{\mathcal{M}}_{\Omega}^lf(x)+\Big(\int^2_1\sum_{j: 2^j\leq 4{\rm diam}\,Q}|F_j^l(f\chi_{B_x})(x,\,t)|^2{\rm d}t\Big)^{\frac{1}{2}}\\
&\leq &\widetilde{\mathcal{M}}_{\Omega}^lf(x)+\Big(\int^2_1\sum_{j: 2^j\leq 4{\rm diam}\,Q}|F_j^lf(x,\,t)|^2{\rm d}t\Big)^{\frac{1}{2}}\\
&&+\Big(\int^2_1\sum_{j: 2^j\leq 4{\rm diam}\,Q}|F_j^l(f\chi_{\mathbb{R}^n\backslash B_x})(x,\,t)|^2{\rm d}t\Big)^{\frac{1}{2}}\\
&=:&{\rm D}_1f(x)+{\rm D}_2f(x)+{\rm D}_3f(x).
\end{eqnarray*}
Lemma \ref{l3.2} and the estimate (2.11) tells us that for any $p\in (1,\,\infty)$,
$$\|{\rm D}_1f\|_{L^p(\mathbb{R}^n)}+\|{\rm D}_2f\|_{L^p(\mathbb{R}^n)}\lesssim l\|f\|_{L^p(\mathbb{R}^n)}.$$
On the other hand, we have
\begin{eqnarray*}
{\rm D}_3f(x)&\le &\sum_{j\in\mathbb{Z}:\,{\rm diam}\,Q/4\leq 2^j\leq 4{\rm diam}\,Q}\sup_{t\in[1,2]}\int_{\mathbb{R}^n}|K_t^j*\phi_{l-j}(x-y)||f(y)|{\rm d}y\\
&\lesssim&M_{q'}f(x).
\end{eqnarray*}
By estimates for ${\rm D}_1$, ${\rm D}_2$ and ${\rm D}_3$, we obtain that
$$
\|\widetilde{\mathcal{M}}_{\Omega}^l(f\chi_{\mathbb{R}^n\backslash B_x})\|_{L^{q',\,\infty}(\mathbb{R}^n)}\lesssim l\|f\|_{L^{q'}(\mathbb{R}^n)}.
$$
This, along with the estimates (2.12) and (2.14), shows that the operator $S^l$ is bounded from $L^{q'}(\mathbb{R}^n)$ to $L^{q',\,\infty}(\mathbb{R}^n)$ with bound $Cl$.

We  now conclude the proof of Lemma \ref{l3.4}. By Lemma \ref{l3.3}, we see that for each $f\in L^{q'}(\mathbb{R}^n)$ with compact support, there exists  sparse families $\mathcal{S}_1,\dots,\,\mathcal{S}_{3^n}$, such that
\begin{eqnarray}\widetilde{\mathcal{M}}_{\Omega}^lf(x)\lesssim l\sum_{j=1}^{3^n}\mathcal{A}_{\mathcal{S}_j,\,q'}f(x).\end{eqnarray}
For each fixed $j=1,\,\dots,\,3^n$, it follows from Lemma \ref{l2.1} that
\begin{eqnarray*}
\|\mathcal{A}_{\mathcal{S}_j,\,q'}f\|_{L^p(\mathbb{R}^n,\,w)}=\big\|\mathcal{A}_{\mathcal{S}_j}^{\frac{1}{q'}}(|f|^{q'})\big\|_{L^{\frac{p}{q'}}(\mathbb{R}^n,\,w)}^{\frac{1}{q'}}
\lesssim&[w]_{A_{p/q'},\,p}\|f\|_{L^p(\mathbb{R}^n,\,w)}.
\end{eqnarray*}
This, via the estimate (2.15) leads to our desired conclusion.
\end{proof}

{\it Proof of Theorem 1.2}. Without loss of generality, we may assume that $\|\Omega\|_{L^{q}(S^{n-1})}=1$. By (2.7), we know that
\begin{eqnarray}\|\widetilde{\mathcal{M}}_{\Omega}^{2^{l}}f-\widetilde{\mathcal{M}}_{\Omega}^{2^{l+1}}f\big\|_{L^2(\mathbb{R}^n)}
\lesssim 2^{-\theta 2^{l}}\|f\|_{L^2(\mathbb{R}^n)},\end{eqnarray} and the series
$$\widetilde{\mathcal{M}}_{\Omega}=\sum_{l=1}^{\infty}(\widetilde{\mathcal{M}}_{\Omega}^{2^{l+1}}-\widetilde{\mathcal{M}}_{\Omega}^{2^{l}})
+\widetilde{\mathcal{M}}_{\Omega}^{2}$$
converges in the $L^2(\mathbb{R}^n)$ operator norm. Let $p\in (q',\,\infty)$ and $w\in A_{p/q'}(\mathbb{R}^n)$, by
\cite[Corollary 3.16 and Corollary 3.17]{hyta}, we know that for $\epsilon=c_n/(w)_{A_{p/q'}}$ with $c_{n}$ a constant depending only on $n$,  $w^{1+\epsilon}\in A_{p/q'}(\mathbb{R}^n)$,
$$[w^{1+\epsilon}]_{A_{p/q'}}\lesssim 4[w]_{A_{p/q'}}^{1+\epsilon},$$
and
$$[w^{1+\epsilon}]_{A_{\infty}}\lesssim [w]_{A_{\infty}}^{1+\epsilon},\,\,[w^{(1-(\frac{p}{q'})')(1+\epsilon)}]_{A_{\infty}}\lesssim [w^{1-(\frac{p}{q'})'}]_{A_{\infty}}^{1+\epsilon}.$$
Therefore,
$$\{w^{1+\epsilon}\}_{A_{p/q'},\,p}\lesssim \{w\}_{A_{p/q'},\,p}^{1+\epsilon}.$$
Lemma \ref{l3.4} tells us that
\begin{eqnarray}\big\|\widetilde{\mathcal{M}}_{\Omega}^{2^{l}}f-\widetilde{\mathcal{M}}_{\Omega}^{2^{l+1}}f\big\|_{L^p(\mathbb{R}^n,\,w^{1+\epsilon})}
\lesssim 2^{l}\{w^{1+\epsilon}\}_{A_{p/q'},\,p}\|f\|_{L^p(\mathbb{R}^n,\,w^{1+\epsilon})}.
\end{eqnarray}
On the other hand, by interpolating the estimates (2.16) and (2.17) with $w=1$, we know that for some $\varrho=\varrho_{p}\in (0,\,1)$,
\begin{eqnarray}\big\|\widetilde{\mathcal{M}}_{\Omega}^{2^{l}}f-\widetilde{\mathcal{M}}_{\Omega}^{2^{l+1}}f\big\|_{L^p(\mathbb{R}^n)}\lesssim 2^{-\varrho2^{l}}\|f\|_{L^p(\mathbb{R}^n)}.\end{eqnarray}
By interpolation with changes of measures (see \cite{stw}), we deduce from (2.17) and (2.18) that
\begin{eqnarray*}\big\|\widetilde{\mathcal{M}}_{\Omega}^{2^{l}}f-\widetilde{\mathcal{M}}_{\Omega}^{2^{l+1}}f\big\|_{L^p(\mathbb{R}^n,\,w)}
\lesssim 2^{l}2^{-\varrho\frac{\epsilon}{1+\epsilon}2^l}\{w\}_{A_{p/q'},p}\|f\|_{L^p(\mathbb{R}^n,\,w)}.
\end{eqnarray*}
As in  \cite{hyta}, a trivial computation involving the inequality
${\rm e}^x\geq x^2/2$,
now shows that
$$\sum_{l=1}^{\infty}2^{l}2^{-\varrho 2^{l}\frac{\epsilon}{1+\epsilon}}\lesssim\sum_{l: 2^{l}\leq \epsilon^{-1}}2^{l}+\sum_{l: 2^{l}> \epsilon^{-1}}2^{l}\big(\frac{2^{l}\epsilon}{1+\epsilon}\big)^{-2}\lesssim (w)_{A_{p/q'}}.
$$
We finally get that
\begin{eqnarray*}\|\widetilde{\mathcal{M}}_{\Omega}f\|_{L^p(\mathbb{R}^n,\,w)}&\le & \|\widetilde{\mathcal{M}}_{\Omega}^2f\|_{L^p(\mathbb{R}^n,\,w)}+\sum_{
l=1}^{\infty}\big\|\widetilde{\mathcal{M}}_{\Omega}^{2^{l+1}}f-\widetilde{\mathcal{M}}_{\Omega}^{2^{l}}f\big\|_{L^p(\mathbb{R}^n,\,w)}\\
&\lesssim &\{w\}_{A_{p/q'},\,p}(w)_{A_{p/q'}}\|f\|_{L^p(\mathbb{R}^n,\,w)}.
\end{eqnarray*}
This completes the proof of Theorem \ref{t1.1}.
\qed

\end{document}